\documentclass[10pt,reqno]{amsart}
\usepackage{bbm}
\usepackage{mathrsfs}
\usepackage{amsfonts} 
\usepackage[dvipsnames,usenames]{color}
\textwidth=13.5cm 
\baselineskip=17pt 
\usepackage{graphicx,latexsym,bm,amsmath,amssymb,verbatim,multicol,lscape}
\vfuzz2pt 
\hfuzz2pt 
\newtheorem{thm}{Theorem} [section]

\newtheorem{lem}[thm]{Lemma}

\theoremstyle{definition}

\theoremstyle{remark}

\newtheorem{con}[thm]{Conjecture}
\numberwithin{equation}{section}

\begin{document}
\title[Multiple reciprocal sums and multiple reciprocal
star sums are never integers]
{Multiple reciprocal sums and multiple reciprocal star sums
of polynomials are almost never integers}
\begin{abstract}
Let $n$ and $k$ be integers such that $1\le k\le n$
and $f(x)$ be a nonzero polynomial of integer coefficients
such that $f(m)\ne 0$ for any positive integer $m$. For any $k$-tuple
$\vec{s}=(s_1, ..., s_k)$ of positive integers, we define
$$H_{k,f}(\vec{s}, n):=\sum\limits_{1\leq i_{1}<\cdots<i_{k}\le n}
\prod\limits_{j=1}^{k}\frac{1}{f(i_{j})^{s_j}}$$
and
$$H_{k,f}^*(\vec{s}, n):=\sum\limits_{1\leq i_{1}\leq \cdots\leq i_{k}\leq n}
\prod\limits_{j=1}^{k}\frac{1}{f(i_{j})^{s_j}}.$$
If all $s_j$ are 1, then let $H_{k,f}(\vec{s}, n):=H_{k,f}(n)$
and $H_{k,f}^*(\vec{s}, n):=H_{k,f}^*(n)$. Hong and Wang refined
the results of Erd\"{o}s and Niven, and of Chen and Tang
by showing that $H_{k,f}(n)$ is not an integer if $n\geq 4$
and $f(x)=ax+b$ with $a$ and $b$ being positive integers.
Meanwhile, Luo, Hong, Qian and Wang established the similar
result when $f(x)$ is of nonnegative integer coefficients
and of degree no less than two. For any $k$-tuple
$\vec{s}=(s_1, ..., s_k)$ of positive integers, Pilehrood,
Pilehrood and Tauraso proved that $H_{k,f}(\vec{s},n)$ and
$H_{k,f}^*(\vec{s},n)$ are nearly never integers if $f(x)=x$.
In this paper, we show that if $f(x)$ is a nonzero polynomial
of nonnegative integer coefficients such that either $\deg f(x)\ge 2$
or $f(x)$ is linear and $s_j\ge 2$ for all integers $j$ with
$1\le j\le k$, then $H_{k,f}(\vec{s}, n)$ and $H_{k,f}^*(\vec{s}, n)$
are not integers except for the case $f(x)=x^{m}$ with $m\geq1$
being an integer and $n=k=1$, in which case, both of $H_{k,f}(\vec{s}, n)$
and $H_{k,f}^*(\vec{s}, n)$ are integers. Furthermore, we prove
that if $f(x)=2x-1$, then both $H_{k,f}(\vec{s}, n)$
and $H_{k,f}^*(\vec{s}, n)$ are not integers except when $n=1$,
in which case $H_{k,f}(\vec{s}, n)$ and $H_{k,f}^*(\vec{s}, n)$
are integers. The method of the proofs is analytic and $p$-adic.
\end{abstract}
\author[Q.Y. Yin]{Qiuyu Yin}
\address{Mathematical College, Sichuan University, Chengdu 610064, P.R. China}
\email{yinqiuyu26@126.com}
\author[S.F. Hong]{Shaofang Hong$^*$}
\address{Mathematical College, Sichuan University, Chengdu 610064, P.R. China}
\email{sfhong@scu.edu.cn; s-f.hong@tom.com; hongsf02@yahoo.com}
\author[L.P. Yang]{Liping Yang}
\address{Mathematical College, Sichuan University, Chengdu 610064, P.R. China}
\email{yanglp2013@126.com}
\author[M. Qiu]{Min Qiu}
\address{Mathematical College, Sichuan University, Chengdu 610064, P.R. China}
\email{qiumin126@126.com}
\thanks{$^*$S.F. Hong is the corresponding author and was supported
partially by National Science Foundation of China Grant \#11771304
and \#11371260.}
\keywords{Integrality, multiple reciprocal sum, multiple reciprocal
star sum, $p$-adic valuation, Riemann zeta function, Bertrand's postulate}
\subjclass[2000]{Primary 11M32, 11N05, 11Y70, 11B75}
\maketitle

\section{Introduction}

Let $n$ and $k$ be integers with $1\le k\le n$ and $f(x)$ be
a polynomial of integer coefficients such that $f(m)\ne 0$
for any positive integer $m$. For any $k$-tuple
$\vec{s}=(s_1, ..., s_k)$ of positive integers, one defines
the {\it multiple reciprocal sum}, denoted by $H_{k,f}(\vec{s}, n)$,
and the {\it multiple reciprocal star sum}, denoted by
$H_{k,f}^*(\vec{s}, n)$, of $f(x)$ as follows:
$$H_{k,f}(\vec{s}, n):=\sum\limits_{1\leq i_{1}<\cdots<i_{k}\le n}
\prod\limits_{j=1}^{k}\frac{1}{f(i_{j})^{s_j}}$$
and
$$H_{k,f}^*(\vec{s}, n):=\sum\limits_{1\leq i_{1}\leq \cdots\leq i_{k}\leq n}
\prod\limits_{j=1}^{k}\frac{1}{f(i_{j})^{s_j}}.$$
For brevity, if $\vec{s}=(1, ..., 1)$, then we write
$H_{k,f}(\vec{s}, n):=H_{k,f}(n)$ and
$H_{k,f}^*(\vec{s}, n):=H_{k,f}^*(n)$ that are called the
{\it multiple harmonic sum} and {\it multiple harmonic star sum}
of $f(x)$, respectively. Such sums are closely related to the
so-called multiple zeta functions that are nested generalizations
of the Riemann zeta function to the multiple variable setting.
For the multiple zeta functions, the readers are referred
to \cite{[Z]}-\cite{[Z2]}.

For the case $\vec{s}=(1, ..., 1)$,
it is well known that if $n\geq 2$ and $f(x)=x$, then
$H_{1,f}(n)$ is not an integer. If $n\ge 2$ and $f(x)=ax+b$,
where $a$ and $b$ are positive integers, Erd\"{o}s and Niven \cite{[EN]}
showed in 1946 that there is only a finite number of integers $n$
such that $H_{k,f}(n)$ can be an integer. Chen and Tang
\cite{[CT]} proved that $H_{k,f}(n)$ cannot be an integer
except for either $n=k=1$ or $n=3$ and $k=2$ if $f(x)=x$.
This result was generalized by Yang, Li, Feng and Jiang \cite{[YLFJ]}.
Wang and Hong \cite{[WH]} proved that $H_{k,f}(n)$ cannot
be an integer if $f(x)=2x-1$ and $n\geq 2$. Consequently,
Hong and Wang \cite{[HW]} extended the results of \cite{[CT]}
and \cite{[WH]} by showing that $H_{k,f}(n)$ is not
an integer if $n\geq 4$ and $f(x)=ax+b$ with $a$ and $b$
being positive integers. Later on, Luo, Hong, Qian and Wang
\cite{[LHQW]} proved that the similar result holds if $f(x)$
is of nonnegative integer coefficients and of degree no less
than two.

Now we let $\vec{s}=(s_1, ..., s_k)$ be any $k$-tuple
of positive integers. Naturally, the following interesting question arises:
If $f(x)$ is an arbitrary polynomial of nonnegative integer coefficients,
are the similar results true for both of $H_{k,f}(\vec{s}, n)$ and
$H_{k,f}^*(\vec{s}, n)$? Recently, Pilehrood et al \cite{[KH]} showed that
if $f(x)=x$, then $H_{k,f}(\vec{s}, n)$ and $H_{k,f}^*(\vec{s}, n)$
are nearly never integers. However, this problem is kept open if $f(x)\ne x$
is any polynomial of nonnegative integer coefficients, see [8, Problem 1].

In this paper, we concentrate on the integrality of $H_{k,f}(\vec{s}, n)$
and $H_{k,f}^*(\vec{s}, n)$. In fact, by developing the techniques and
ideas in \cite{[HW]}, \cite{[LHQW]} and \cite{[WH]}, we will show that
if either $f(x)$ is of degree at least two, or
$f(x)$ is linear and $s_j\ge 2$ for all integers $j$ with
$1\le j\le k$, then both of $H_{k,f}(\vec{s},n)$ and $H_{k,f}^*(\vec{s},n)$
are not integers with the exception of $f(x)=x^{m}$ with $m\geq1$
being an integer and $n=1$. In other words, the first main result of
this paper can be stated as follows.

\begin{thm}\label{thm}
Let $n$ and $k$ be integers such that $1\le k\le n$.
Let $f(x)$ be a nonzero polynomial of nonnegative integer coefficients
and $\vec{s}=(s_1, ..., s_k)$ be a $k$-tuple of positive integers such
that either $f(x)$ is of degree at least two, or $f(x)$ is linear
and $s_j$ is greater than two for all integers $j$ with
$1\le j\le k$. Then $H_{k,f}(\vec{s}, n)$ and $H_{k,f}^*(\vec{s}, n)$
are not integers except for the case when $f(x)=x^{m}$ with $m\geq1$
being an integer and $n=k=1$, in which case, both of $H_{k,f}(\vec{s}, n)$
and $H_{k,f}^*(\vec{s}, n)$ are integers.
\end{thm}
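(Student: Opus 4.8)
The plan is to attack both sums with a single $p$-adic principle: a rational number that is an integer has nonnegative valuation at every prime, so it suffices to produce, for each admissible datum, a prime $p$ with $v_p\big(H_{k,f}(\vec{s},n)\big)<0$, and likewise for the star sum. The excepted case is dispatched at once: since $n=k=1$ forces $H_{1,f}(\vec{s},1)=H^*_{1,f}(\vec{s},1)=f(1)^{-s_1}$, and $f(1)$ is the sum of the nonnegative integer coefficients of $f$, this value is an integer precisely when $f(1)=1$, that is, when $f(x)=x^{m}$; in every other instance $f(1)>1$ has a prime divisor $p$ and $v_p\big(f(1)^{-s_1}\big)=-s_1 v_p(f(1))<0$.

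For the star sum the argument is clean once the right prime is in hand. Suppose $p$ is a prime for which $v_p(f(i))$, taken over $1\le i\le n$, attains its maximum $\alpha\ge 1$ at a \emph{unique} index $m$. For any admissible tuple one has
$$v_p\Big(\prod_{j=1}^{k}f(i_j)^{-s_j}\Big)=-\sum_{j=1}^{k}s_j\,v_p(f(i_j))\ \ge\ -\alpha\sum_{j=1}^{k}s_j,$$
with equality if and only if every $i_j=m$. Since the constant tuple $(m,\dots,m)$ is non-decreasing, it is a legitimate summand, and it is the one and only term of minimal valuation. The ultrametric inequality then gives $v_p\big(H^*_{k,f}(\vec{s},n)\big)=-\alpha\sum_{j}s_j<0$, with no possibility of cancellation. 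Thus the whole difficulty of the star case is pushed into the existence of a prime with a strictly dominant valuation index.

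The strict sum is more delicate because $m$ may no longer be repeated. Here I would instead insist that $p$ divide $f(n)$ but none of $f(1),\dots,f(n-1)$; then in a strictly increasing tuple the index $n$ can sit only in the last slot, and separating the terms that contain $n$ yields
$$H_{k,f}(\vec{s},n)=\frac{1}{f(n)^{s_k}}\,H_{k-1,f}\big((s_1,\dots,s_{k-1}),\,n-1\big)+R,$$
where $R$ gathers the tuples avoiding $n$. Because $p\nmid f(i)$ for $i<n$, both $R$ and the auxiliary sum lie in $\mathbb{Z}_p$, so $v_p(R)\ge 0$ and $v_p\big(H_{k,f}(\vec{s},n)\big)=-s_k\alpha+v_p\big(H_{k-1,f}((s_1,\dots,s_{k-1}),n-1)\big)$ provided this quantity is negative, i.e. provided $v_p\big(H_{k-1,f}((s_1,\dots,s_{k-1}),n-1)\big)<s_k\alpha$. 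I would secure this by taking $p$ large enough to miss every prime factor of the numerator of the fixed rational number $H_{k-1,f}((s_1,\dots,s_{k-1}),n-1)$, forcing that valuation to vanish; an induction on $k$ (base case $k=1$, the star computation above) keeps the bookkeeping straight.

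The step I expect to be the genuine obstacle—common to both sums—is the prime-existence lemma: that for every admissible $f$ and every $n\ge 2$ there is a prime $p$ whose valuation peaks at a single index and, for the strict sum, a large such prime dividing exactly $f(n)$. This is where the hypotheses and the analytic machinery must be spent. When $\deg f\ge 2$ the values $f(1)<\cdots<f(n)$ grow like a power of $n$, leaving enough room for $f(n)$ (or $f(m)$ for a suitable $m$, e.g. the largest prime below $n$) to carry a prime factor not shared by the earlier values; when $f$ is linear the hypothesis on the exponents $s_j$ supplies the analogous slack. Bertrand's postulate and elementary prime-counting estimates would produce the prime, comparison with the Riemann zeta function would control the sizes of the tails, and for the finitely many small $n$ beyond the reach of the asymptotics the crude bound $0<H_{k,f}(\vec{s},n)<1$ (and its star analogue) rules out integrality directly. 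Reconciling ``$p$ divides a unique value'' with ``$p$ is large enough to clear the inductive numerator'' is the knot that the detailed estimates must untie.
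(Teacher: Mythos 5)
Your plan stands or falls on the prime-existence lemma that you yourself flag as ``the genuine obstacle,'' and unfortunately that lemma is not merely unproven in your sketch --- it is false for polynomials covered by the theorem. Take $f(x)=x^2+x=x(x+1)$, which has nonnegative integer coefficients and degree two. For any prime $p$, let $p^a$ be the largest power of $p$ not exceeding $n+1$; then $v_p\big(f(i)\big)=a$ exactly when $i$ or $i+1$ is a multiple of $p^a$, and no index gives a larger valuation. Hence the maximal valuation is attained at a unique index only when $n+1$ is itself a power of $p$, and a prime dividing $f(n)$ but none of $f(1),\dots,f(n-1)$ exists only when $n+1$ is prime. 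Concretely, for $n=9$ the values are $2,6,12,20,30,42,56,72,90$: the maximum of $v_2$ is $3$, attained at $i\in\{7,8\}$; of $v_3$ is $2$, at $i\in\{8,9\}$; of $v_5$ is $1$, at $i\in\{4,5,9\}$; of $v_7$ is $1$, at $i\in\{6,7\}$. No prime has a strictly dominant index, so your ultrametric argument for the star sum cannot start, and the analogous decomposition for the strict sum has no admissible $p$ either. Since this failure occurs for every $n$ with $n+1$ not a prime power (density one), it cannot be swept into a finite list of small cases checked by the bound $0<H<1$; for such $f$ your method would have to prove the bound $0<H<1$ for \emph{all} $n$, which is exactly the analytic work you hoped to avoid. (Your treatment of the exceptional case $n=k=1$, and the observation that a uniquely-attained maximal valuation forces negative valuation of the star sum, are both correct as far as they go.)

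The paper's proof inverts your emphasis. The $p$-adic/Bertrand argument is used only where uniqueness of the dominant index is automatic, namely for monomials $f(x)=a_mx^m$: there $v_p(f(i))$ is governed by $v_p(i)$, and Bertrand's postulate supplies a prime $p\in(\frac{n}{2},n]$ dividing exactly one integer in $[1,n]$. Every other case is handled by size estimates showing $0<H^*_{k,f}(\vec{s},n)<1$: Lemma 2.1 reduces this to $H^*_{1,f}(n)<1$, which is verified with telescoping and zeta-value bounds when $\deg f\ge 2$ and $f$ has at least two terms (with a separate, more delicate analysis via Lemmas 2.2 and 2.3 for the borderline polynomial $f(x)=x^2+1$), and the linear case with all $s_j\ge 2$ is folded in by comparison with $(x+1)^2$. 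For the non-star sum with $k\ge 2$ the paper simply compares with $h(x)=x^2$ and invokes the known bound $0<H_{k,h}(n)<1$ from \cite{[LHQW]}, while $k=1$ coincides with the star sum. If you want to salvage a purely $p$-adic proof, you would need a substitute valuation-theoretic invariant that survives ties among maximizing indices; as stated, the approach has an unfixable gap.
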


If $f(x)=ax+b$ with $a$ and $b$ being positive integers,
then \cite{[HW]} tells us that when $s_j=1$ for all integers $j$
with $1\le j\le k$, $H_{k,f}(\vec{s}, n)$ is not an integer. But the
integrality of $H_{k,f}(\vec{s}, n)$ and $H_{k,f}^*(\vec{s}, n)$
is still unknown if $f(x)\ne x$ and at least one of $s_j$ is
strictly greater than 1 and at least one of $s_j$ equals 1.
In this regard, we have the following result that is the
second main result of this paper and
extends the main result of \cite{[WH]}.

\begin{thm}\label{thm2}
Let $k$ and $n$ be positive integers such that $1\le k \le n$.
Let $\vec{s}=(s_1, ..., s_k)$ be a $k$-tuple of positive integers
and let $f(x)=2x-1$. Then both of $H_{k,f}(\vec{s},n)$
and $H_{k,f}^*(\vec{s},n)$ are not integers except when $n=1$,
in which case both of $H_{k,f}(\vec{s},n)$ and $H_{k,f}^*(\vec{s},n)$
are integers.
\end{thm}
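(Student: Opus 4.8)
The plan is to fix a suitable prime $p$ and to show that the $p$-adic valuation of each sum is negative whenever $n\ge 2$, so that neither sum can be an integer; the case $n=1$ (which forces $k=1$) is immediate since $f(1)=1$ gives $H_{k,f}(\vec s,1)=H^*_{k,f}(\vec s,1)=1$. For $n\ge 2$ I would invoke Bertrand's postulate to take $p$ to be the largest prime not exceeding $2n-1$; then $p\ge n$, and because $3p>2n-1$ the only odd multiple of $p$ in $\{1,3,\dots,2n-1\}$ is $p$ itself. Writing $f(i)=2i-1$, this means there is a unique index $i_0=(p+1)/2\in\{1,\dots,n\}$ with $v_p(f(i_0))=1$, while $v_p(f(i))=0$ for every $i\ne i_0$; since $p\ge n\ge 2$ we have $p^2\ge n^2>2n-1$, so the valuation at $i_0$ is exactly $1$.

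For the star sum the argument is then clean. Since repetitions are allowed, the diagonal tuple $i_1=\cdots=i_k=i_0$ occurs and contributes $f(i_0)^{-(s_1+\cdots+s_k)}=p^{-(s_1+\cdots+s_k)}$, of valuation $-(s_1+\cdots+s_k)$. Every other admissible tuple has at least one index different from $i_0$, so the sum of the exponents $s_j$ over the positions with $i_j=i_0$ is strictly smaller, giving valuation strictly larger. Hence the diagonal term is the unique term of minimal valuation, and $v_p\big(H^*_{k,f}(\vec s,n)\big)=-(s_1+\cdots+s_k)<0$, so $H^*_{k,f}(\vec s,n)$ is not an integer.

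For $H_{k,f}(\vec s,n)$ repetitions are forbidden, so $i_0$ can occupy only one position. I would split the sum as $A+B$, where $A$ collects the tuples avoiding $i_0$ (so $v_p(A)\ge 0$, every factor being a $p$-adic unit) and $B$ collects those containing $i_0$. Grouping $B$ according to the position $l$ of $i_0$ gives $B=\sum_l p^{-s_l}C_l$, where each $C_l$ is a $p$-adic integer, namely the product of a lower weighted multiple sum over $\{1,\dots,i_0-1\}$ with weights $(s_1,\dots,s_{l-1})$ and an upper one over $\{i_0+1,\dots,n\}$ with weights $(s_{l+1},\dots,s_k)$. At least one position is admissible because $1\le k\le n$ and $i_0\in\{1,\dots,n\}$. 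Setting $S=\max\{s_l: l \text{ admissible}\}$ and multiplying by $p^{S}$, every position with $s_l<S$ contributes a multiple of $p$, so $p^{S}B\equiv\sum_{l:\,s_l=S}C_l\pmod p$. Thus it suffices to prove that this cofactor sum is nonzero modulo $p$, for then $v_p(B)=-S<0$ and, since $v_p(A)\ge 0>v_p(B)$, we obtain $v_p\big(H_{k,f}(\vec s,n)\big)=-S<0$.

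The main obstacle is precisely this last non-vanishing statement, which is where the strict sum is genuinely harder than the star sum and where the residues $f(i)\bmod p$ must be used. When all $s_j=1$ the cofactor sum collapses to the elementary symmetric function $e_{k-1}$ of the residues $\{f(i)^{-1}\bmod p: i\ne i_0\}$, and I would establish its non-vanishing through Newton's identities together with Wolstenholme-type evaluations of the power sums $\sum_{i\ne i_0}f(i)^{-m}\pmod p$, exploiting that the numbers $2i-1$ for $i\ne i_0$ reduce to the nonzero residues $1,3,\dots,p-2$ together with $2,4,\dots,2(n-i_0)$. For general $\vec s$ the cofactor sum no longer factors through a single symmetric function, so the delicate point is to rule out cancellation among the finitely many admissible positions sharing the maximal weight $S$; I expect to control this by taking $p$ as large as possible, so that $i_0\ge (n+1)/2$ and at most $(n-1)/2$ indices lie above $i_0$, sharply limiting the admissible positions, and then analyzing the resulting short multiple sums modulo $p$ directly. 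The finitely many small values of $n$ not reached by these estimates would be dispatched by a direct valuation computation.
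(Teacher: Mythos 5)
Your treatment of the star sum is correct and is essentially the paper's own argument: Bertrand's postulate gives a prime $p$ with $\tfrac{2n-1}{2}<p\le 2n-1$, the diagonal tuple $i_1=\cdots=i_k=i_0$ is the unique term of minimal $p$-adic valuation $-\sum_{j}s_j$, and the conclusion follows. The $n=1$ case is also fine.

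For the strict sum $H_{k,f}(\vec s,n)$, however, there is a genuine gap, and you have located it yourself: the non-vanishing modulo $p$ of the cofactor sum $\sum_{l:\,s_l=S}C_l$. This is not a technical detail to be finessed; it is the entire difficulty, and the tools you propose will not deliver it. The residues $\{(2i-1)\bmod p:\ i\ne i_0,\ 1\le i\le n\}$ form an incomplete and essentially arbitrary subset of $\mathbb{F}_p^{*}$ (its size and composition depend on how $2n-1$ sits relative to $p$), and elementary symmetric functions of such subsets can perfectly well vanish modulo $p$; Wolstenholme-type identities only control power sums over \emph{complete} (or very structured) residue systems, and Newton's identities then give you nothing. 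For general $\vec s$ the situation is worse, since several positions can share the maximal weight $S$ and their cofactors can cancel. The paper avoids this cancellation problem entirely by choosing a \emph{different} prime: using Dusart's refinement of the prime-counting estimates (Lemma \ref{lem5}/\ref{lem6}, not mere Bertrand), it produces, for $n>62801$ and $k$ below a logarithmic bound, a prime $p$ with $\tfrac{n}{k+1/2}<p\le\tfrac nk$ and $p>2k$, so that the odd multiples of $p$ up to $2n-1$ are exactly $p,3p,\dots,(2k-1)p$ --- precisely $k$ of them. Then the \emph{single} tuple using all $k$ of these multiples is the unique term of valuation $-\sum_j s_j$, and every other term has strictly larger valuation (Lemma \ref{lem7}); no mod-$p$ non-vanishing of any auxiliary sum is needed. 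Large $k$ is handled separately by the size estimate $0<H_{k,f}(\vec s,n)<1$ (Lemma \ref{lem4}). Finally, note that your closing claim that the remaining small $n$ are ``dispatched by a direct valuation computation'' underestimates the problem: for each fixed pair $(n,k)$ there are still \emph{infinitely many} exponent tuples $\vec s$, so no finite computation suffices; the paper needs separate arguments there (explicit bounds forcing $v_q(H_1)\le -2$ when $s_1$ is large, Maple verification for bounded $s_1$, and ad hoc splittings for $n\in\{6,9\}$, etc.). As it stands, your proposal proves the theorem only for the star sum.
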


The method of the proofs of Theorems 1.1 and 1.2 is analytic
and $p$-adic in character. The paper is organized as follows.
First, in Section 2, we show some preliminary lemmas which
are needed for the proofs of Theorems \ref{thm} and \ref{thm2}.
Then in Section 3 and Section 4, we present the proofs of Theorem
\ref{thm} and Theorem \ref{thm2}, respectively. Finally,
we consider the integrality problem for any integer coefficients
polynomial $f(x)$ and, in fact, we propose a conjecture as a
conclusion of this paper.

\section{Auxiliary lemmas}

In this section, we present several auxiliary lemmas that are needed
in the proofs of Theorems {\ref{thm}} and {\ref{thm2}}.
We begin with the following results.

\begin{lem}\label{lem1} Let $n$ be a positive integer with $n\ge 2$
and let $f(x)$ be a polynomial of integer coefficients such that
$f(m)>0$ for all positive integers $m$. Then each of the following is true:

{\rm (i).} For any integer $k$ with $1\leq k\leq n-1$, we have
$H_{k+1,f}^*(n)< H_{1,f}^*(n)H_{k,f}^*(n).$

{\rm (ii).} If $H_{1,f}^*(n)<1$, then $0<H_{k,f}^*(n)<1$ for all
integers $k$ with $1\le k\le n$.
\end{lem}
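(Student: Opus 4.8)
The plan is to treat both parts by expanding the relevant products as sums of strictly positive terms and then comparing them summand by summand; the positivity is guaranteed throughout because $f(m)>0$ for every positive integer $m$.

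For part (i), I would first apply the distributive law to write
$$H_{1,f}^*(n)H_{k,f}^*(n)=\sum_{i_0=1}^{n}\ \sum_{1\le i_1\le\cdots\le i_k\le n}\frac{1}{f(i_0)}\prod_{j=1}^{k}\frac{1}{f(i_j)},$$
so that the product is a sum over all pairs consisting of a free index $i_0$ and a weakly increasing $k$-tuple, each summand being positive. The key observation is that $H_{k+1,f}^*(n)$ embeds into this sum: the map sending a weakly increasing $(k+1)$-tuple $(i_1\le\cdots\le i_{k+1})$ to the pair $\bigl(i_{k+1},(i_1,\dots,i_k)\bigr)$ is injective and preserves the value of the associated reciprocal product, so every summand of $H_{k+1,f}^*(n)$ occurs among the summands of $H_{1,f}^*(n)H_{k,f}^*(n)$. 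To upgrade this to a strict inequality, I would exhibit a summand of the product lying outside the image of the map; for instance the pair with $i_0=1$ and $i_1=\cdots=i_k=2$ satisfies $i_0<i_k$, hence is not of the form $\bigl(i_{k+1},(i_1,\dots,i_k)\bigr)$, and it exists precisely because $n\ge 2$. Positivity of all terms then forces $H_{k+1,f}^*(n)<H_{1,f}^*(n)H_{k,f}^*(n)$.

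For part (ii), I would argue by induction on $k$, running from $k=1$ up to $k=n$. Positivity of $H_{k,f}^*(n)$ holds in every case since each summand is a product of positive reciprocals, and the base case $k=1$ is exactly the hypothesis $H_{1,f}^*(n)<1$. For the inductive step, assuming $0<H_{k,f}^*(n)<1$ for some $k$ with $1\le k\le n-1$, part (i) yields
$$H_{k+1,f}^*(n)<H_{1,f}^*(n)H_{k,f}^*(n)<1\cdot 1=1,$$
which completes the step; here the range $1\le k\le n-1$ in part (i) is exactly what is needed to reach $k+1=n$.

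The argument is essentially combinatorial once the product is expanded, so I do not anticipate a serious analytic difficulty. The only point requiring care is the injectivity-and-strictness bookkeeping in part (i): I must check that the designated-largest-coordinate map is genuinely injective (it is, since the original tuple is recovered by reinserting $i_{k+1}$ as the top element) and that at least one summand of the product is left uncovered (guaranteed by $n\ge 2$). Everything else follows directly from the positivity of $f$ on the positive integers.
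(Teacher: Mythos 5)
Your proof is correct and takes essentially the same route as the paper's: your injection $(i_1\le\cdots\le i_{k+1})\mapsto\bigl(i_{k+1},(i_1,\dots,i_k)\bigr)$ is exactly the paper's decomposition of $H_{k+1,f}^*(n)$ by its largest index $t=i_{k+1}$, with strictness supplied by an explicit uncovered positive summand (the paper instead notes the inner sums over tuples bounded by $t$ are strictly smaller than $H_{k,f}^*(n)$ when $t<n$). Likewise your induction in part (ii) is the same application of part (i) together with $H_{1,f}^*(n)<1$ that the paper phrases as monotonicity of $H_{k,f}^*(n)$ in $k$; there are no gaps.
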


\begin{proof} (i). Let $k$ be an integer such that $1\leq k\leq n-1$. Since
$$H_{k+1,f}^*(n)=\sum_{1\leq i_{1}\leq \cdots\leq i_k\leq i_{k+1}\leq n}
\prod\limits_{j=1}^{k+1}\frac{1}{f(i_{j})}$$
and the coefficients of $f$ are integers such that $f(m)>0$
for all positive integers $m$, it then follows that
\begin{align} \label{eq0}
H_{k+1,f}^*(n)&=\sum\limits_{t=1}^{n}
\frac{1}{f(t)}\sum\limits_{1\leq i_1\leq\cdots \le i_k\leq t}
\prod\limits_{j=1}^{k}\frac{1}{f(i_{j})}\nonumber \\
&< \sum\limits_{t=1}^{n}\frac{1}{f(t)}\sum\limits_{1\leq i_1\leq\cdots\le i_k\leq n}
\prod\limits_{j=1}^{k}\frac{1}{f(i_{j})}({\rm since} \ n\ge2)\nonumber \\
&=\sum\limits_{t=1}^{n}\frac{1}{f(t)}\cdot  H_{k,f}^*(n)\nonumber \\
&=H_{1,f}^*(n)H_{k,f}^*(n)
\end{align}
as required. So part (i) is proved.

(ii). Clearly, $H_{k,f}^*(n)>0$ for all integers $k$ with $1\le k\le n$.
Since $H_{1,f}^*(n)<1$, one has $H_{k-1,f}^*(n)H_{1,f}^*(n)<H_{k-1,f}^*(n)$.
But (\ref{eq0}) gives that $H_{k,f}^*(n)< H_{1,f}^*(n)H_{k-1,f}^*(n)$. So
$H_{k,f}^*(n)<H_{k-1,f}^*(n)$. Namely, $H_{k,f}^*(n)$ is decreasing as $k$
increases. Thus $H_{k,f}^*(n)\le H_{1,f}^*(n)<1$ for all
integers $k$ with $1\le k\le n$. Part (ii) is proved.

This completes the proof of Lemma \ref{lem1}.
\end{proof}

\begin{lem}\label{lem2}Let $n$ and $t$ be positive integers with $n\ge 3$
and let $f(x)$ be a polynomial of integer coefficients such that
$f(2)^2>f(1)f(3)$ and $f(m)>0$ for all positive integers $m$.
Then for any positive integer $k$ with $k\le n-1$, we have
$$H_{k+1,f}^*(n)<\Bigg(\frac{1}{f(1)}
+\sum\limits_{t=3}^{n}\frac{1}{f(t)}\Bigg)H_{k,f}^*(n).$$
\end{lem}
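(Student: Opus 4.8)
The plan is to start from the same identity that underlies the proof of Lemma \ref{lem1}, namely
$$H_{k+1,f}^*(n)=\sum_{t=1}^{n}\frac{1}{f(t)}H_{k,f}^*(t),$$
and then to sharpen the crude bound $H_{k,f}^*(t)\le H_{k,f}^*(n)$ only where it would cost us the unwanted term $1/f(2)$. Concretely, I would split off the summands $t=1$ and $t=2$ and bound the tail $\sum_{t=3}^{n}$ by monotonicity of the map $t\mapsto H_{k,f}^*(t)$ (each increment adds only nonnegative terms, since $f(m)>0$), obtaining
$$H_{k+1,f}^*(n)\le \frac{1}{f(1)}H_{k,f}^*(1)+\frac{1}{f(2)}H_{k,f}^*(2)+\Big(\sum_{t=3}^{n}\frac{1}{f(t)}\Big)H_{k,f}^*(n).$$
Comparing this with the asserted bound, the whole lemma reduces to the single inequality
$$\frac{1}{f(1)}H_{k,f}^*(1)+\frac{1}{f(2)}H_{k,f}^*(2)<\frac{1}{f(1)}H_{k,f}^*(n),$$
since adding $\big(\sum_{t=3}^{n}1/f(t)\big)H_{k,f}^*(n)$ to both sides recovers the claim.

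Next I would remove the dependence on $n$ using $H_{k,f}^*(n)\ge H_{k,f}^*(3)$ (again monotonicity, valid because $n\ge 3$), so that it suffices to prove the purely three-variable statement
$$\frac{1}{f(2)}H_{k,f}^*(2)<\frac{1}{f(1)}\big(H_{k,f}^*(3)-H_{k,f}^*(1)\big).$$
This is the heart of the matter and the step that must consume the hypothesis $f(2)^2>f(1)f(3)$. To treat it I would set $a=1/f(1)$, $b=1/f(2)$, $c=1/f(3)$, so that the hypothesis reads $ac>b^2$, and use the recurrence $H_{k,f}^*(3)=H_{k,f}^*(2)+c\,H_{k-1,f}^*(3)$ (split the multisets of size $k$ in $\{1,2,3\}$ according to whether the largest entry is $3$) together with the closed forms $H_{k,f}^*(1)=a^k$ and $H_{k,f}^*(2)=\sum_{r=0}^{k}a^rb^{k-r}$. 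A short telescoping computation then collapses the elementary part and yields the exact identity
$$a\big(H_{k,f}^*(3)-H_{k,f}^*(1)\big)-b\,H_{k,f}^*(2)=ac\,H_{k-1,f}^*(3)-b^{k+1}.$$

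Finally I would close the estimate with the one-term bound $H_{k-1,f}^*(3)\ge b^{k-1}$ (the all-$2$'s multiset is a single nonnegative summand, with the convention $H_{0,f}^*(3)=1$), so that $ac\,H_{k-1,f}^*(3)\ge ac\,b^{k-1}>b^2\cdot b^{k-1}=b^{k+1}$ by the hypothesis, making the right-hand side strictly positive. I expect the main obstacle to be the middle paragraph: recognizing that, after stripping off the $t=2$ summand, the remaining deficit is exactly $b^{k+1}$, and that $ac>b^2$ is precisely the strength needed to cover it; the monotonicity reductions flanking this computation are routine. I would also verify the boundary case $k=1$, where $H_{0,f}^*(3)=1$ turns $H_{k-1,f}^*(3)\ge b^{k-1}$ into an equality, but the strict inequality $ac>b^2$ still closes the argument.
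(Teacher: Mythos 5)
Your proof is correct. In your notation $a=1/f(1)$, $b=1/f(2)$, $c=1/f(3)$ (so that the hypothesis reads $ac>b^2$), the claimed identity
\begin{equation*}
a\bigl(H_{k,f}^*(3)-H_{k,f}^*(1)\bigr)-b\,H_{k,f}^*(2)=ac\,H_{k-1,f}^*(3)-b^{k+1}
\end{equation*}
does follow from the recurrence $H_{k,f}^*(3)=H_{k,f}^*(2)+c\,H_{k-1,f}^*(3)$ and the telescoping $(a-b)\sum_{r=0}^{k}a^{r}b^{k-r}=a^{k+1}-b^{k+1}$, and your endgame $ac\,H_{k-1,f}^*(3)\ge ac\,b^{k-1}>b^{k+1}$ is valid, including the boundary case $k=1$ with the convention $H_{0,f}^*(3)=1$.

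As for the comparison: your opening moves coincide with the paper's — both expand $H_{k+1,f}^*(n)=\sum_{t=1}^{n}\frac{1}{f(t)}H_{k,f}^*(t)$, split off the $t=1,2$ terms, bound the tail via $H_{k,f}^*(t)\le H_{k,f}^*(n)$, and both spend the hypothesis $f(2)^2>f(1)f(3)$ in exactly the same place, namely to dominate the all-$2$'s term $1/f(2)^{k+1}$ by $1/\bigl(f(1)f(3)f(2)^{k-1}\bigr)=ac\,b^{k-1}$. The genuine divergence is in how the head terms are absorbed. The paper writes the $t=1,2$ contributions as explicit sums over multisets from $\{1,2\}$, performs the replacement just described, and then finishes by observing that every remaining term is $\frac{1}{f(1)}$ times a distinct summand of $H_{k,f}^*(n)$ — an injection of multisets into the index set of $H_{k,f}^*(n)$. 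You instead reduce, by monotonicity, to a three-variable inequality at $n=3$ and prove an exact identity showing that the deficit is precisely $b^{k+1}$. Your organization isolates the minimal statement that consumes the hypothesis and exhibits the estimate as sharp up to that single term; it also sidesteps a small blemish in the paper: the paper's final strict inequality (absorbing the leftover terms into $\frac{1}{f(1)}H_{k,f}^*(n)$) actually degenerates to an equality in the corner case $n=3$, $k=1$, and the lemma survives there only because the earlier replacement step is strict — your arrangement never meets this issue, since your only strict inequality is the one coming from $ac>b^2$. What the paper's route buys in exchange is that it needs no algebraic identity at all: the bound is read off from the multiset expansion in a single pass.
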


\begin{proof}
Since $f(x)$ is of integer coefficients and $f(m)>0$
for all positive integers $m$, we can deduce that
\begin{align} \label{eq1}
H_{k+1,f}^*(n)&=\sum\limits_{1\leq i_{1}\leq\cdots \le i_k\le i_{k+1}
\leq n}\prod\limits_{j=1}^{k+1}\frac{1}{f(i_{j})}\nonumber\\
&=\sum\limits_{t=1}^n\frac{1}{f(t)}\sum\limits_{1\leq i_{1}\leq \cdots \leq i_{k} \leq t}\prod\limits_{j=1}^{k}\frac{1}{f(i_{j})}\nonumber\\
&=\sum\limits_{t=3}^n\frac{1}{f(t)}\sum\limits_{1\leq i_{1}\leq \cdots \leq i_{k}\leq t}\prod\limits_{j=1}^{k}\frac{1}{f(i_{j})}
+\frac{1}{f(2)}\sum\limits_{1 \leq i_{1}\leq \cdots \leq i_{k}\le 2}\prod\limits_{j=1}^{k}\frac{1}{f(i_{j})}\nonumber\\
&+\frac{1}{f(1)}\sum\limits_{1\leq i_{1}\leq \cdots \leq i_{k}\le 1}\prod\limits_{j=1}^{k}\frac{1}{f(i_{j})}\nonumber\\
&=\sum\limits_{t=3}^n\frac{1}{f(t)}H_{k,t}^*(f)+\sum_{i=0}^k\frac{1}{f(1)^if(2)^{k+1-i}}+\frac{1}{f(1)^{k+1}}\nonumber\\
&\le \sum\limits_{t=3}^n\frac{1}{f(t)}H_{k, n}^*(f)
+\sum_{i=0}^k\frac{1}{f(1)^if(2)^{k+1-i}}+\frac{1}{f(1)^{k+1}} \ ({\rm since} \ t\le n)\nonumber\\
&=\sum\limits_{t=3}^n\frac{1}{f(t)}H_{k, n}^*(f)+\sum_{i=1}^k
\frac{1}{f(1)^if(2)^{k+1-i}}+\frac{1}{f(1)^{k+1}}+\frac{1}{f(2)^{k+1}}.
\end{align}

Notice that the hypothesis that $f(2)^2>f(1)f(3)>0$ implies that
$$\frac{1}{f(1)f(3)}>\frac{1}{f(2)^2}.$$
It follows that
\begin{align} \label{eq2}
\frac{1}{f(2)^{k+1}}<\frac{1}{f(1)f(3)f(2)^{k-1}}.
\end{align}
Then from (\ref{eq1}) and (\ref{eq2}) we derive that
\begin{align*}
H_{k+1,f}^*(n)&<\sum\limits_{t=3}^n\frac{1}{f(t)}H_{k, n}^*(f)+\sum_{i=1}^k\frac{1}{f(1)^if(2)^{k+1-i}}+\frac{1}{f(1)^{k+1}}+\frac{1}{f(1)f(3)f(2)^{k-1}}\\
&< \sum\limits_{t=3}^n\frac{1}{f(t)}H_{k, n}^*(f)+\frac{1}{f(1)} \sum\limits_{1\leq i_{1}\leq\cdots \le i_k\leq n}\prod\limits_{j=1}^{k}\frac{1}{f(i_{j})}\\
&=\Big(\frac{1}{f(1)}+\sum\limits_{t=3}^n\frac{1}{f(t)}\Big)H_{k, n}^*(f)
\end{align*}
as desired. This finishes the proof of Lemma \ref{lem2}.
\end{proof}

\begin{lem}\label{lem3} Let $a$, $b$, $m$ and $n$ be positive integers such that $2\le a\le b$ and $m<n$. Then
\begin{align*}
\sum_{m \le i<j \le n}\frac{1}{i^{a}j^b}<&\frac{1}{2}\big(\zeta(a)\zeta(b)-\zeta(a+b)-\zeta(a)H_{m-1}(b)-\zeta(b)H_{m-1}(a)\\
&+H_{m-1}(a)H_{m-1}(b)+H_{m-1}(a+b)\big)
\end{align*}
with $H_s(t)$ being defined by
$$H_{s}(t):=\sum\limits_{i=1}^{s}\dfrac{1}{i^t}$$
for all positive integers $s$ and $t$.
\end{lem}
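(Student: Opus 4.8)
The plan is to pass from the finite double sum to its infinite analogue, so that the finite tail sums turn into full $\zeta$-values, and to exploit the hypothesis $a\le b$ through a termwise symmetry comparison. Write $A=\sum_{i=m}^{\infty}i^{-a}$, $B=\sum_{j=m}^{\infty}j^{-b}$ and $C=\sum_{i=m}^{\infty}i^{-(a+b)}$; each converges since $a\ge 2$. Note $A=\zeta(a)-H_{m-1}(a)$, $B=\zeta(b)-H_{m-1}(b)$ and $C=\zeta(a+b)-H_{m-1}(a+b)$, and a direct expansion of $\tfrac12(AB-C)$ shows that this equals exactly the claimed right-hand side. Thus it suffices to prove $\sum_{m\le i<j\le n}i^{-a}j^{-b}<\tfrac12(AB-C)$.

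First I would introduce the two infinite sums
$$\tilde S_1=\sum_{m\le i<j}\frac{1}{i^a j^b},\qquad \tilde S_2=\sum_{m\le i<j}\frac{1}{i^b j^a},$$
both taken over all pairs with $m\le i<j<\infty$. Since every term is positive and the given finite sum omits each pair with $j>n$ (for instance $(i,j)=(m,n+1)$, which is legitimate because $m<n+1$), the finite sum is \emph{strictly} smaller than $\tilde S_1$. Hence it is enough to establish $\tilde S_1\le\tfrac12(AB-C)$.

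Next I would decompose the product $AB=\sum_{i\ge m}\sum_{j\ge m}i^{-a}j^{-b}$ according to whether $i<j$, $i=j$ or $i>j$. The diagonal contributes $C$, the range $i<j$ contributes $\tilde S_1$, and relabelling the range $i>j$ turns it into $\tilde S_2$, so that $AB=\tilde S_1+\tilde S_2+C$, i.e. $\tilde S_1+\tilde S_2=AB-C$. The crucial point, and the only place the hypothesis $a\le b$ is used, is the termwise inequality $\tilde S_1\le\tilde S_2$: for each pair $i<j$ one has $i^{\,b-a}\le j^{\,b-a}$, which is equivalent to $\tfrac{1}{i^a j^b}\le\tfrac{1}{i^b j^a}$. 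Summing gives $\tilde S_1\le\tilde S_2$, whence $2\tilde S_1\le\tilde S_1+\tilde S_2=AB-C$ and therefore $\tilde S_1\le\tfrac12(AB-C)$.

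Combining the two displayed inequalities yields the lemma. I expect the only genuine subtlety to be the bookkeeping of where the strictness originates: the comparison $\tilde S_1\le\tilde S_2$ degenerates to equality when $a=b$, so the strict inequality of the statement cannot be read off there; it must instead come from discarding the nonempty tail of pairs with $j>n$ when passing from the finite sum to $\tilde S_1$. It is also worth noting that a naive attempt to bound the finite sum directly against $\zeta$-values fails, because the finite diagonal $\sum_{i=m}^{n}i^{-(a+b)}$ is smaller than $C$ and so pushes the estimate the wrong way; routing the argument through the infinite tails is precisely what repairs this defect.
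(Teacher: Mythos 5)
Your proposal is correct and takes essentially the same route as the paper: both arguments rest on the symmetrization that exploits $a\le b$ (your termwise comparison $\tilde S_1\le\tilde S_2$), the passage to the infinite tail sums, and the factorization $AB-C=\big(\zeta(a)-H_{m-1}(a)\big)\big(\zeta(b)-H_{m-1}(b)\big)-\big(\zeta(a+b)-H_{m-1}(a+b)\big)$. The only differences are cosmetic: the paper symmetrizes over the finite range first (writing $\sum_{m\le i<j\le n}\le\frac12\sum_{m\le i\ne j\le n}$, leaving the $a\le b$ comparison implicit) and then enlarges to the infinite range, whereas you enlarge first and then symmetrize, making explicit both the use of $a\le b$ and the source of strictness.
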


\begin{proof} By the definitions of $H_s(t)$ and Riemann zeta function, we can easily derive that
\begin{align*}
\sum_{m \le i<j \le n}\frac{1}{i^{a}j^b}\le &\frac{1}{2}\sum_{m \le i\neq  j \le n}\frac{1}{i^{a}j^b}\\
<&\frac{1}{2}\sum_{i, j\ge m \atop i\neq j}\frac{1}{i^{a}j^b}\\
=&\frac{1}{2}\Big(\sum_{i, j\ge m}\frac{1}{i^{a}j^b}-\sum_{i=m}^\infty \frac{1}{i^{a+b}}\Big)\\
=&\frac{1}{2}\Big(\Big(\sum_{i=m}^\infty\frac{1}{i^{a}}\Big)\Big(\sum_{j=m}^\infty\frac{1}{j^{b}}\Big)-\sum_{i=m}^\infty \frac{1}{i^{a+b}}\Big)\\
=&\frac{1}{2}\big(\big(\zeta(a)-H_{m-1}(a)\big)\big(\zeta(b)-H_{m-1}(b)\big)-\big(\zeta(a+b)-H_{m-1}(a+b)\big)\big)\\
=&\frac{1}{2}\big(\zeta(a)\zeta(b)-\zeta(a+b)-\zeta(a)H_{m-1}(b)-\zeta(b)H_{m-1}(a)\\
&+H_{m-1}(a)H_{m-1}(b)+H_{m-1}(a+b)\big)
\end{align*}
as required. The proof of Lemma \ref{lem3} is complete.
\end{proof}

{\bf Remark.} We notice that the special case when $m=1$ and $a=b=2$
of Lemma \ref{lem3} was used in (2.2) of \cite{[LHQW]}. We also point
out that the special case when $m=3$ and $a=b=2$ of Lemma \ref{lem3}
will be used in the proof of Theorem \ref{thm} below.

\begin{lem}\label{lem4}
Let $a, b, k$ and $n$ be positive integers such that $2\le k \le n$.
Let $\vec{s}=(s_1, ..., s_k)$ be a $k$-tuple of positive integers
and $f(x)=ax+(b-a)$. If $n\le \frac{b}{a}\big(e^{a(\sqrt{2b^2+1}-1)/b}-1\big)$
or $k \ge \frac{e}{a}\log\frac{an+b}{b}+\frac{e}{b}$,
then $0< H_{k,f}(\vec{s},n)<1$.
\end{lem}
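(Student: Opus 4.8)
The plan is to reduce everything to the ordinary sum $H_{k,f}(n)$ and then to exploit that it is the $k$-th elementary symmetric function of the numbers $\frac{1}{f(i)}$. Since $f(i)=a(i-1)+b\ge b\ge 1$ for every $i\ge 1$ and each $s_j\ge 1$, every factor obeys $\frac{1}{f(i_j)^{s_j}}\le\frac{1}{f(i_j)}$, so termwise $0<H_{k,f}(\vec{s},n)\le H_{k,f}(n)$. The left-hand inequality is already the desired lower bound (the index set is nonempty because $k\le n$), so everything reduces to proving the strict upper bound $H_{k,f}(n)<1$ under either hypothesis, where I regard $H_{k,f}(n)$ as the $k$-th elementary symmetric function of $x_i:=\frac{1}{f(i)}$, $1\le i\le n$.

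I would use three elementary facts. First, expanding $H_{1,f}(n)^k=\big(\sum_{i=1}^n\frac{1}{f(i)}\big)^k$ and discarding the nonnegative terms with repeated indices gives $H_{k,f}(n)\le\frac{1}{k!}H_{1,f}(n)^k$. Second, for $k=2$ the exact identity $H_{2,f}(n)=\frac12\big(H_{1,f}(n)^2-\sum_{i=1}^n\frac{1}{f(i)^2}\big)$ will be needed. Third, comparing the decreasing sum $\sum_{i=1}^n\frac{1}{a(i-1)+b}$ with the integral of $\frac{1}{a(x-1)+b}$ yields the strict estimate $H_{1,f}(n)<\frac1b+\frac1a\log\frac{an+b}{b}$, valid since $n\ge k\ge 2$.

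For the hypothesis $k\ge\frac{e}{a}\log\frac{an+b}{b}+\frac{e}{b}$, I would combine $H_{k,f}(n)\le\frac{1}{k!}H_{1,f}(n)^k$ with the standard bound $k!>(k/e)^k$ to get $H_{k,f}(n)<\big(\frac{e\,H_{1,f}(n)}{k}\big)^k$. The integral estimate then gives $e\,H_{1,f}(n)<e\big(\frac1b+\frac1a\log\frac{an+b}{b}\big)\le k$, so the base is strictly less than $1$ and $H_{k,f}(n)<1$ follows immediately.

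For the hypothesis $n\le\frac{b}{a}\big(e^{a(\sqrt{2b^2+1}-1)/b}-1\big)$, rearranging shows it is equivalent to $\log\frac{an+b}{b}\le\frac{a(\sqrt{2b^2+1}-1)}{b}$, which with the integral estimate forces $H_{1,f}(n)\le\frac{\sqrt{2b^2+1}}{b}$, that is $H_{1,f}(n)^2\le 2+\frac{1}{b^2}$. For $k\ge 3$ this already closes the argument, since $H_{k,f}(n)\le\frac{1}{k!}H_{1,f}(n)^k\le\frac{(\sqrt3)^k}{k!}$, which equals $\frac{\sqrt3}{2}<1$ at $k=3$ and decreases thereafter. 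The delicate case is $k=2$, and this is the main obstacle: the crude bound gives only $H_{2,f}(n)\le\frac12 H_{1,f}(n)^2\le 1+\frac{1}{2b^2}>1$, so it must be sharpened via the identity $H_{2,f}(n)=\frac12\big(H_{1,f}(n)^2-\sum_{i=1}^n\frac{1}{f(i)^2}\big)$ together with $\sum_{i=1}^n\frac{1}{f(i)^2}>\frac{1}{f(1)^2}=\frac{1}{b^2}$ (strict because $n\ge 2$), which gives $H_{2,f}(n)<\frac12\big((2+\frac{1}{b^2})-\frac{1}{b^2}\big)=1$. The constant $\sqrt{2b^2+1}$ is precisely calibrated to make the two $\frac{1}{b^2}$ contributions cancel, which is why the first hypothesis is stated with that square root rather than a rounder bound.
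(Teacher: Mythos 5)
Your proposal is correct, and its opening reduction is exactly the paper's: since $f(i)=a(i-1)+b\ge b\ge 1$ and $s_j\ge 1$, one gets $0<H_{k,f}(\vec{s},n)\le H_{k,f}(n)$ termwise, so everything hinges on $H_{k,f}(n)<1$. Where you diverge is in how that last inequality is established: the paper simply rewrites $H_{k,f}(n)$ as $\sum_{0\le i'_1<\cdots<i'_k\le n-1}\prod_{j=1}^k\frac{1}{ai'_j+b}$ and then cites Lemma 2.2 of Hong--Wang \cite{[HW]}, which asserts precisely that this quantity is less than $1$ under either of the two stated hypotheses (indeed, the peculiar constants $\frac{b}{a}\big(e^{a(\sqrt{2b^2+1}-1)/b}-1\big)$ and $\frac{e}{a}\log\frac{an+b}{b}+\frac{e}{b}$ in the lemma are imported verbatim from that reference). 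You instead prove the inequality from first principles: the bound $H_{k,f}(n)\le\frac{1}{k!}H_{1,f}(n)^k$ with $k!>(k/e)^k$ handles the hypothesis on $k$, the integral estimate $H_{1,f}(n)<\frac{1}{b}+\frac{1}{a}\log\frac{an+b}{b}$ feeds both cases, and the Newton-type identity $H_{2,f}(n)=\frac{1}{2}\big(H_{1,f}(n)^2-\sum_{i=1}^n\frac{1}{f(i)^2}\big)$ rescues the delicate $k=2$ case, where the crude symmetric-function bound alone gives only $1+\frac{1}{2b^2}$. I checked the details (the strictness coming from $n\ge k\ge 2$, the monotone decrease of $3^{k/2}/k!$ from $\frac{\sqrt 3}{2}$ at $k=3$, and the cancellation of the two $\frac{1}{b^2}$ terms) and they all hold. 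What the paper's route buys is brevity and consistency with its sources; what yours buys is a self-contained lemma and a transparent explanation of why the threshold involves $\sqrt{2b^2+1}$ --- it is exactly calibrated so that $H_{1,f}(n)^2\le 2+\frac{1}{b^2}$ cancels against the power-sum term in the $k=2$ identity, which is presumably the mechanism inside the cited lemma of \cite{[HW]} as well.
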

\begin{proof}
For any integer $i_j$ such that $1\le i_j\le n$, we have $1\le b\le ai_j+b-a$.
So it is clear that $ H_{k,f}(\vec{s},n)>0$.
To prove Lemma \ref{lem4}, it is sufficient to prove $H_{k,f}(\vec{s},n)<1$.
The hypothesis $s_j \ge 1$ together with $ai_j+b-a\ge 1$
for all integers $j$ with $1\le j\le k$ tells us
$$\frac{1}{(ai_j+b-a)^{s_j}}\le \frac{1}{ai_j+b-a}$$
for all integers $i_j$ with $1 \le i_j \le n$. It then follows that
$$H_{k,f}(\vec{s},n)\le H_{k,f}(n).$$

On the other hand, we have
$$H_{k,f}(n)=\sum\limits_{1\le i_1<\cdots<i_k\le n}\prod\limits_{j=1}^k\frac{1}{ai_j+b-a}=
\sum\limits_{0\le i'_1<\cdots<i'_k\le n-1}\prod\limits_{j=1}^k\frac{1}{ai'_j+b}.$$
Lemma 2.2 in \cite{[HW]} tells us $H_{k,f}(n)<1$ if $n\le \frac{b}{a}\big(e^{a(\sqrt{2b^2+1}-1)/b}-1\big)$
or $k \ge \frac{e}{a}\log\frac{an+b}{b}+\frac{e}{b}$.
Thus one has $H_{k,f}(\vec{s},n)<H_{k,f}(n)<1$ as desired.
So Lemma \ref{lem4} is proved.
\end{proof}

\begin{lem}\label{lem5} \rm{(}\cite{[PD1]},\cite{[PD2]}\rm{)}
\it {For any real number $x\ge 3275$, there is a prime number $p$
satisfying $x< p\le x\big(1+\frac{1}{2\log^2x}\big).$}
\end{lem}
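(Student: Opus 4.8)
The statement is a quantitative, short-interval sharpening of Bertrand's postulate: it guarantees a prime in $(x, x(1+\frac{1}{2\log^2x})]$, an interval of relative length only $\frac{1}{2\log^2x}$, once $x\ge 3275$. Since it is quoted from the explicit estimates of Dusart, the natural route is through effective two-sided bounds on the Chebyshev function $\theta(x)=\sum_{p\le x}\log p$. The plan is to reduce the existence assertion to a strict inequality for $\theta$ and then feed in the known explicit bounds.

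First I would set $y:=x\big(1+\frac{1}{2\log^2x}\big)$ and observe that $\theta(y)-\theta(x)=\sum_{x<p\le y}\log p$ is a sum of positive terms, so a prime lies in $(x,y]$ if and only if $\theta(y)>\theta(x)$. Thus it suffices to prove $\theta(y)-\theta(x)>0$ for every $x\ge 3275$. Next I would invoke the explicit estimates of the form $|\theta(u)-u|\le u\,\eta(u)$ valid for all $u\ge u_0$, where the error function $\eta(u)$ decays like a negative power of $\log u$; these come from the verified zero-free region for the Riemann zeta function together with the numerical verification of the Riemann hypothesis up to a large height. Combining the lower bound for $\theta(y)$ with the upper bound for $\theta(x)$ gives
$$\theta(y)-\theta(x)\ge (y-x)-\big(y\,\eta(y)+x\,\eta(x)\big).$$
Since $y-x=\frac{x}{2\log^2x}$, the problem becomes the elementary comparison of showing that the gap length $\frac{x}{2\log^2x}$ strictly exceeds the total error $y\,\eta(y)+x\,\eta(x)$ for all $x\ge 3275$. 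The main-term inequality holds for all sufficiently large $x$ by a direct asymptotic comparison; the precise cutoff $3275$ is then confirmed by a finite numerical check on the residual bounded range, cross-referenced against tabulated prime gaps.

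The hard part is securing an error function $\eta$ sharp enough that an interval of relative width $\frac{1}{2\log^2x}$ still captures a prime all the way down to $x=3275$. This is exactly where the deepest analytic input enters, namely the explicit zero-free region and the RH verification underlying Dusart's bounds for $\theta(x)$; the delicate bookkeeping of the error terms near the threshold, together with the accompanying finite computation, is what pins down the specific constant $3275$ rather than a larger, more easily attained bound.
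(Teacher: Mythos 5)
The first thing to say is that the paper itself contains no proof of this lemma: it is imported verbatim from Dusart's thesis and 1999 paper (the two references attached to the statement) and used as a black box in the proof of Lemma 2.6. So the only thing your proposal can be measured against is Dusart's own argument, and your outline does describe that argument correctly at the level of strategy: reduce the existence of a prime in $(x,y]$ to the positivity of $\theta(y)-\theta(x)$, insert explicit two-sided bounds $|\theta(u)-u|\le u\,\eta(u)$, and close the low range by a finite computation.

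As a proof, however, there is a genuine gap, and it sits exactly where you wrote that ``the main-term inequality holds for all sufficiently large $x$ by a direct asymptotic comparison.'' The interval width here is $y-x=\frac{x}{2\log^2 x}$, which is of the \emph{same} order as the error terms of the shape $\eta(u)=c/\log^2 u$ that one has at this level of technology. With such a bound the required inequality $(y-x)>y\,\eta(y)+x\,\eta(x)$ reads, up to lower-order corrections, $\frac{1}{2}>2c$: it is a fight between constants, not between growth rates, and it holds for \emph{no} $x$ whatsoever unless $c<\frac14$. So an unspecified bound ``decaying like a negative power of $\log u$'' does not suffice; one must invoke an estimate with constant strictly below $\frac14$ (Dusart proves one of strength $|\theta(u)-u|\le 0.2\,u/\log^2 u$ beyond an explicit threshold in the millions), carry out the bookkeeping comparing $\log y$ with $\log x$, and then verify the range from $3275$ up to that threshold directly against prime tables. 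Your proposal defers every one of these quantitatively decisive steps back to ``Dusart's bounds'' and ``a finite numerical check,'' which is logically no more than what the paper's citation already does; and the one quantitative claim it does make (that asymptotics alone settle the main term) is false for error terms of the $1/\log^2$ shape.
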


\begin{lem}\label{lem6} Let $n$ be an integer such that $n>62801$.
Then for any integer $k$ with $1\le k\le \frac{e}{2}\log(2n+1)+e$,
there exists a prime $p$ satisfying
$\frac{n}{k+\frac{1}{2}}<p\le \frac{n}{k}$ and $p> 2k$.
\end{lem}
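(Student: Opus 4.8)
The plan is to apply the sharp Bertrand-type estimate in Lemma~\ref{lem5} to the point $x:=\frac{n}{k+\frac12}$, and then to show that the prime it produces lands in the half-open interval $\big(\frac{n}{k+\frac12},\frac{n}{k}\big]$ and exceeds $2k$. The reason for this choice of $x$ is the identity
\[
x\Big(1+\frac{1}{2k}\Big)=\frac{n}{k+\frac12}\cdot\frac{2k+1}{2k}=\frac{2n}{2k+1}\cdot\frac{2k+1}{2k}=\frac{n}{k},
\]
so that the target interval is exactly $\big(x,\,x(1+\frac{1}{2k})\big]$. Writing $K:=\frac{e}{2}\log(2n+1)+e$ for the upper bound on $k$, the whole argument reduces to three inequalities, each of which I would turn into a single-variable estimate in $n$ by a monotonicity observation.

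First I would check that Lemma~\ref{lem5} applies, i.e. that $x\ge 3275$. Since $x=\frac{n}{k+\frac12}$ is decreasing in $k$, it suffices to treat the largest admissible value, which leads to the inequality $n\ge 3275\big(K+\frac12\big)$. A direct computation shows this holds precisely when $n>62801$ (the left side grows linearly while the right side grows like $\log n$, so once it holds at the boundary it persists), and this is where the numerical threshold in the statement originates. Granting $x\ge 3275$, Lemma~\ref{lem5} furnishes a prime $p$ with $x<p\le x\big(1+\frac{1}{2\log^2 x}\big)$.

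Next I would force $p$ into the desired interval by showing $\log^2 x\ge k$, for then $1+\frac{1}{2\log^2 x}\le 1+\frac{1}{2k}$ and hence $p\le x(1+\frac{1}{2k})=\frac{n}{k}$, giving $\frac{n}{k+\frac12}<p\le\frac{n}{k}$. Since $g(k):=\log^2\big(\frac{n}{k+\frac12}\big)-k$ is decreasing in $k$, it again suffices to verify $g(K)\ge 0$, i.e. $\log^2\big(\frac{n}{K+\frac12}\big)\ge K$; using $\frac{n}{K+\frac12}\ge 3275$ from the previous step together with the asymptotics $\log^2\big(\frac{n}{K+\frac12}\big)\sim\log^2 n$ and $K\sim\frac{e}{2}\log n$ makes this routine for $n>62801$. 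Finally, for the condition $p>2k$ I would use $p>x=\frac{n}{k+\frac12}$ and reduce to $\frac{n}{k+\frac12}\ge 2k$, i.e. $n\ge 2k^2+k$; since $k\le K=O(\log n)$ this is comfortably satisfied in the given range.

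The main obstacle is not conceptual but the careful bookkeeping of the three inequalities at the boundary $n=62802$: in particular pinning down that the applicability condition $x\ge3275$ is the binding one (it is what fixes the constant $62801$), while the interval condition $\log^2 x\ge k$ and the size condition $p>2k$ both hold with ample slack. The monotonicity reductions above are what keep this from degenerating into a two-variable verification in $(n,k)$.
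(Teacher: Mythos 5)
Your proposal follows essentially the same route as the paper's own proof: apply Lemma \ref{lem5} at $x=\frac{n}{k+\frac12}$, use the identity $x\big(1+\frac{1}{2k}\big)=\frac{n}{k}$, reduce the interval condition to the two inequalities $x\ge 3275$ and $\log^2 x\ge k$, and reduce $p>2k$ to $n>2k\big(k+\frac12\big)$ --- precisely the paper's claim (2.4)--(2.5) and its consequence. The only difference is one of rigor, not of method: where you call the boundary verifications ``routine'' or appeal to asymptotics, the paper carries them out explicitly with monotone auxiliary functions ($f$, $g$, $h$) whose derivatives are checked to be positive and whose values are checked at $n=62801$ (and at $\log n=11$), which is exactly the bookkeeping your sketch would still need to supply.
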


\begin{proof}
We claim that if $n>62801$ and $1\le k\le \frac{e}{2}\log(2n+1)+e$,
then the following two inequalities hold:
\begin{align} \label{eq18}
\frac{n}{k+\frac{1}{2}}\ge 3275,
\end{align}
and
\begin{align} \label{eq19}
\log^2\frac{n}{k+\frac{1}{2}}\ge k.
\end{align}
In fact, if the claim is true, then Lemma \ref{lem5}
tells us that there exists a prime $p$ such that
$$\frac{n}{k+\frac{1}{2}}<p\le \frac{n}{k+\frac{1}{2}}\Big(1+\frac{1}{2\log^2\frac{n}{k+\frac{1}{2}}}\Big)
\le\frac{n}{k+\frac{1}{2}}\Big(1+\frac{1}{2k}\Big)= \frac{n}{k}.$$
Further, it is obvious that $\log(2n+1)<\log2n+1$ if $n\ge 1$. Thus
\begin{align}\label{eq200}
\frac{e}{2}\log2n+\frac{3e}{2}>\frac{e}{2}\log(2n+1)+e \ge k.
\end{align}
 Let
$$h(x)=x-\Big(\frac{e}{2}\log2x+\frac{3e}{2}+\frac{1}{2}\Big)(e\log2x+3e).$$
Then one has
$$h'(x)=1-\Big(\frac{e^2\log2x}{x}+\frac{3e^2+\frac{e}{2}}{x}\Big).$$
But $h(62801)>0$ and $h'(n)>0$ since $n>62801$, so $h(n)>0$.
From (\ref{eq200}), it follows that $n> 2k(k+\frac{1}{2})$,
which implies that $p\ge\frac{n}{k+\frac{1}{2}}>2k$.

Thus, in the following, we only need to prove the claim.
First, we show (\ref{eq18}). Let
$$f(x)=x-3275\Big(\frac{e}{2}\log(2x+1)+e+\frac{1}{2}\Big).$$
Then $f'(x)=1-\frac{3275e}{2x+1}.$
It is easy to check that $f'(x)>0$ if $x>62801$.
Further, we have $f(62801)>0$. Hence, for any integer
$n>62801$, we have $f(n)>f(62801)>0$, which implies that
$$n>3275\Big(\frac{e}{2}\log(2n+1)+e+\frac{1}{2}\Big)>3275\big(k+\frac{1}{2}\big)$$
since $\frac{e}{2}\log(2n+1)+e \ge k$. So (\ref{eq18}) follows immediately.

Now, we show (\ref{eq19}). By (\ref{eq200}), it is enough to show that
\begin{align}\label{eq20}
\Big(\log n-\log\Big(k+\frac{1}{2}\Big)\Big)^2> \frac{e}{2}\log 2n+\frac{3e}{2}.
\end{align}
Moreover, it is not hard to see that (\ref{eq20}) follows from the following inequality
\begin{align}\label{eq21}
\log n-2\log\Big(k+\frac{1}{2}\Big)> \frac{e}{2}+\frac{e}{\log n}\Big(\frac{\log2}{2}+\frac{3}{2}\Big),
\end{align}
Hence our goal is to prove (\ref{eq21}).

Let $$g(x)=x-2\log\Big(\frac{e\log2}{2}+\frac{ex}{2}+\frac{3e}{2}+\frac{1}{2}\Big).$$
Then one has $$g'(x)=1-\frac{2}{x+\log2+3+\frac{1}{e}}.$$
Clearly, $g'(x)>0$ if $x> -1-\log2-\frac{1}{e}$.
But $g(11)>2$. Then one has $g(x)>2$ if $x> 11$.
Since $\log n>11$ if $n>62801$, it follows that
$$g(\log n)=\log n-2\log\Big(\frac{e}{2}\log2n+\frac{3e}{2}+\frac{1}{2}\Big)>2.$$
On the other hand, one can easily check that if $n>62801$, then
$\frac{e}{2}+\frac{e}{\log n}\Big(\frac{\log2}{2}+\frac{3}{2}\Big)<2.$
Thus we get that
\begin{equation}\label{eq200'}
\log n-2\log\Big(\frac{e}{2}\log2n
+\frac{3e}{2}+\frac{1}{2}\Big)>\frac{e}{2}+\frac{e}{\log n}\Big(\frac{\log2}{2}+\frac{3}{2}\Big).
\end{equation}
Then (\ref{eq200}) and (\ref{eq200'}) imply the truth of (\ref{eq21}).
So the claim is proved. This completes the proof of Lemma 2.6.
\end{proof}

For any prime number $p$ and any integer $x$, we let $v_p(x)$ stand for
the largest nonnegative integer $r$ such that $p^r$ divides $x$. In
what follows, we consider the $p$-adic valuation of $H_{k,f}(\vec{s},n)$
under certain conditions if $f(x)=2x-1$.

\begin{lem}\label{lem7}
Let $k$ and $n$ be positive integers such that $1\le k \le n$.
Let $f(x)=2x-1$ and $\vec{s}=(s_1, ..., s_k)$ be a $k$-tuple of
positive integers. If there is a prime number $p$ such that
$\frac{n}{k+\frac{1}{2}}<p\le \frac{n}{k}$ and $p> 2k$, then
$v_p\big(H_{k,f}(\vec{s},n)\big)=-\sum_{i=1}^ks_i.$
\end{lem}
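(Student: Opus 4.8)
The plan is to isolate the unique summand whose $p$-adic valuation is minimal and then invoke the ultrametric (isosceles) property of $v_p$. First I would observe that since $p>2k\ge 2$ the prime $p$ is odd, so $p\mid f(i)=2i-1$ forces $2i-1$ to be an \emph{odd} multiple of $p$; writing $2i-1=p(2m-1)$ with $m\ge 1$, the admissible indices $i\in\{1,\dots,n\}$ are exactly those with $p(2m-1)\le 2n-1$. The decisive quantitative step is to use both bounds on $p$ to pin down the range of $m$: from $p\le\frac{n}{k}$ one gets $p(2k-1)=2pk-p\le 2n-p\le 2n-1$, so $m=k$ is allowed, while from $\frac{n}{k+\frac{1}{2}}<p$ one gets $p(2k+1)>2n>2n-1$, so $m=k+1$ is excluded. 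Hence the indices $i$ with $p\mid 2i-1$ are precisely $a_1<\cdots<a_k$ with $a_m:=\frac{p(2m-1)+1}{2}$, and there are exactly $k$ of them.

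Next I would record that each such factor contributes valuation exactly one. Indeed $2a_m-1=p(2m-1)$ with $1\le 2m-1\le 2k-1<p$, so $p\nmid(2m-1)$ and therefore $v_p(2a_m-1)=1$. Now consider a summand indexed by $1\le i_1<\cdots<i_k\le n$; its valuation equals $-\sum_{j\,:\,p\mid 2i_j-1}s_j$, since the factors divisible by $p$ are among the $2a_m-1$ and each has $v_p=1$, while the others have $v_p=0$. Because there are exactly $k$ indices in $\{1,\dots,n\}$ divisible by $p$ and a summand chooses $k$ distinct indices, the only summand in which every factor is divisible by $p$ is the diagonal one with $i_j=a_j$ for all $j$, whose valuation is $-\sum_{j=1}^k s_j$. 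Every other summand misses at least one $a_m$, so the set of positions $j$ with $p\mid 2i_j-1$ is a proper subset of $\{1,\dots,k\}$; as all $s_j\ge 1$, its valuation is at least $-\bigl(\sum_{j=1}^k s_j-1\bigr)>-\sum_{j=1}^k s_j$.

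Finally I would combine these facts through the strong triangle inequality for $v_p$: the diagonal term uniquely attains the minimal valuation $-\sum_{j=1}^k s_j$ among all summands, and every remaining term has strictly larger valuation, whence $v_p\bigl(H_{k,f}(\vec{s},n)\bigr)=-\sum_{j=1}^k s_j$. I expect the main obstacle to be the counting step, namely verifying that the two hypotheses $\frac{n}{k+\frac{1}{2}}<p\le\frac{n}{k}$ together force exactly $k$ indices with $p\mid 2i-1$, neither more nor fewer; the parity bookkeeping (that $2i-1$ selects only odd multiples of the odd prime $p$) and the bound $p>2k$ ensuring valuation exactly one are what make the minimal summand unique, and once that uniqueness is established the $p$-adic conclusion is immediate.
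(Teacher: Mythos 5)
Your proposal is correct and takes essentially the same route as the paper's proof: both use the two bounds on $p$ to show that $p, 3p, \ldots, (2k-1)p$ are exactly the multiples of $p$ among $\{2i-1\}_{i=1}^{n}$, so that the unique ``all divisible'' summand (the paper's $S_1$) has valuation exactly $-\sum_{j=1}^k s_j$ while every other summand has valuation at least $1-\sum_{j=1}^k s_j$, and then conclude by the ultrametric property. Your write-up is in fact slightly more explicit than the paper's on the counting step, since you verify both that $m=k$ is admissible (from $p\le n/k$) and that $m=k+1$ is excluded (from $n/(k+\frac12)<p$), whereas the paper only spells out the second inequality.
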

\begin{proof}
Since $p>2k$ and $k\ge 1$, it then follows that $p$
is an odd positive integer. So there
exists a positive integer $r$ such that $2r-1=p$.

The hypothesis $\frac{n}{k+\frac{1}{2}}<p$
tells us that $2n-1<p+2pk-1.$ So we conclude that $\{p,p+2p,\cdots,p+2p(k-1)\}$
are all integers in $\{2i-1\}_{i=1}^{n}$ divisible by $p$.

Using such prime $p$, we can split $H_{k,f}(\vec{s},n)$ as follows: $H_{k,f}(\vec{s},n)=S_1+S_2$, where
$$S_1=\sum\limits_{1\le i_1<\cdots <i_k\le n
\atop p|2i_j-1,\forall1\le j\le k}\prod\limits_{j=1}^k\frac{1}{(2i_j-1)^{s_j}}$$
and
$$S_2=\sum\limits_{1\le i_1<\cdots <i_k\le n
\atop \exists j\ s.t.\ p\nmid 2i_j-1}\prod\limits_{j=1}^k\frac{1}{(2i_j-1)^{s_j}}.$$

We show that $v_p(S_1)=-\sum_{j=1}^k s_j$ in what follows.
Since $\{p,p+2p,\cdots,p+2p(k-1)\}$
are all integers in $\{2i-1\}_{i=1}^{n}$ divisible by $p$,
$S_1$ can be written as follows:
$$S_1=\prod\limits_{j=1}^k\frac{1}{(2pj-p)^{s_j}}=p^{-\sum\limits_{j=1}^ks_j}\prod\limits_{j=1}^k\frac{1}{(2j-1)^{s_j}}.$$
Note that $p>2k$. Then
$$v_p\Big(\prod\limits_{j=1}^k\frac{1}{(2j-1)^{s_j}}\Big)=0.$$
Hence we derive that
$$v_p(S_1)=v_p\Big(p^{-\sum\limits_{j=1}^ks_j}\Big)
+v_p\Big(\prod\limits_{j=1}^k\frac{1}{(2j-1)^{s_j}}\Big)=-\sum\limits_{j=1}^k s_j.$$

Now we count $v_p(S_2)$. We have
\begin{align*}
v_p(S_2)&=v_p\Big(\sum\limits_{1\le i_1<\cdots <i_k\le n
\atop \exists j\ s.t.\ p\nmid 2i_j-1}\prod\limits_{j=1}^k\frac{1}{(2i_j-1)^{s_j}}\Big)\\
&\ge \min\limits_{1\le i_1<\cdots <i_k\le n
\atop \exists j\ s.t.\ p\nmid 2i_j-1}v_p\Big(\prod\limits_{j=1}^k\frac{1}{(2i_j-1)^{s_j}}\Big)\\
&\ge 1-\sum\limits_{j=1}^k s_j.
\end{align*}
Hence, one has $v_p(S_1) < v_p(S_2)$. It then follows that
$$v_p(H_{k,f}(s,n))=v_p(S_1+S_2)=\min\{v_p(S_1), v_p(S_2)\}=-\sum\limits_{j=1}^k s_j$$
as desired. Thus Lemma \ref{lem7} is proved.
\end{proof}

\section{Proof of Theorem \ref{thm}}
It is well known that the values of Riemann zeta function at 2 and 4
are given as follows (see, for instance, \cite{[K]}):
$$\zeta(2)=\sum\limits_{j=1}^{\infty}\frac{1}{j^{2}}=\frac{\pi^{2}}{6} \ {\rm and} \
\zeta(4)=\sum\limits_{j=1}^{\infty}\frac{1}{j^{4}}=\frac{\pi^{4}}{90}.$$
Then $1<\zeta(2)<2$ and
\begin{align} \label{eq11}
\frac{1}{2}<\sum\limits_{j=1}^\infty\frac{1}{j^2+2}<\zeta(2)-\frac{3}{4}<1.
\end{align}
We can now prove Theorem \ref{thm} as follows.\\

{\it Proof of Theorem \ref{thm}.}
First of all, we treat $H_{k,f}(\vec{s},n)$.
Since $f(x)$ is a polynomial of nonnegative integer coefficients
and either $\deg f(x)\ge2$ or $f(x)$ is linear and $s_j \ge2$ for
all integers $j$ with $1\le j\le k$, it follows that for any
positive integer $r$, we have $f(r)^{s_j}\ge r^2$. Then one deduces
that if $k\ge2$, then
\begin{align}\label{eq9'}
H_{k,f}(\vec{s},n)=\sum\limits_{1\leq i_{1}<\cdots< i_{k}\leq n}
\prod\limits_{j=1}^{k}\frac{1}{f(i_{j})^{s_j}}
\le \sum\limits_{1\leq i_{1}<\cdots< i_{k}\leq n}
\prod\limits_{j=1}^{k}\frac{1}{i_j^2}=H_{k,h}(n),
\end{align}
with $h(x):=x^2$. But it was proved in \cite{[LHQW]} that $0<H_{k, q}(n)<1$ for
any polynomial $q(x)$ of nonnegative integer coefficients and of degree at least two.
Hence $0<H_{k,h}(n)<1$. It follows from (\ref{eq9'}) that $0<H_{k,f}(\vec{s},n)<1$.
In other words, $H_{k,f}(\vec{s},n)$ is not an integer if $k\ge 2$.
On the other hand, it is clear that $H_{1,f}(\vec{s},n)=H_{1,f}^*(\vec{s},n)$.
So it remains to deal with the integrality of $H_{k,f}^*(\vec{s},n)$ that
will be done in what follows.

Let $n=1$. If $f(x)=x^m$ with $m\ge 1$ being an integer,
then $H_{1,f}^*(\vec{s},1)=1$ is an integer. Otherwise, one has $H_{1,f}^*(\vec{s},1)=\frac{1}{f(1)^{s_1}}<1$
that infers $H_{1,f}^*(\vec{s},1)$ is not an integer. In the remaining of the proof, we
always let $n>1$. We divide the proof into the following four cases.

{\sc Case 1.} $f(x)$ is a monomial. One may write $f(x)=a_mx^m$ with $m\geq 1$ and $a_m\ge 1$.
Then by Bertrand's postulate, there is
at least one prime $p$ such that $\frac{n}{2}<p\leq n$, i.e. $p\leq n<2p$.
So $p$ cannot divide any integer between 1 and $n$ different from $p$. Hence
\begin{align*}
a_m^{\sum\limits_{i=1}^{k}s_i} \cdot H_{k,f}^*(\vec{s},n)
&=\underset{1\leq i_1\leq \cdots \leq i_k\leq n,\exists i_j \neq p}{\sum}\frac{1}{(i_{1})^{s_1m}
\cdots (i_{k})^{s_km}}+\sum\limits_{i_1=\cdots=i_k=p}\frac{1}{(i_{1})^{s_1m}\cdots (i_{k})^{s_km}}\\
&:=\frac{a}{bp^e}+\frac{1}{p^{m\sum\limits_{i=1}^{k}s_i}},
\end{align*}
with $a, b$ and $e$ being positive integers such that $(a,b)=(a,p)=(b,p)=1$
and $0\le e<m\sum\limits_{i=1}^{k}s_i$. Then one can deduce that
\begin{align} \label{eq9}
a_m^{\sum\limits_{i=1}^{k}s_i} \cdot bp^{m\sum\limits_{i=1}^{k}s_i}
\cdot H_{k,f}^*(\vec{s},n)=ap^{m(\sum\limits_{i=1}^{k}s_i)-e}+b.
\end{align}

Suppose that $H_{k,f}^*(\vec{s},n)$ is an integer.
Then $p$ divides each of
$$bp^{m\sum\limits_{i=1}^{k}s_i} \cdot H_{k,f}^*(\vec{s},n)$$
and
$$ap^{m\big(\sum\limits_{i=1}^{k}s_i\big)-e}.$$
Then from (\ref{eq9}) one can read that $p$ divides $b$,
which contradicts to the fact that $p \nmid b$. So $H_{k,f}^*(\vec{s},n)$
must be non-integer. Theorem 1.1 is proved in this case.

{\sc Case 2.} $f(x)$ holds degree at least two and contains at
least two terms and $f(x)\ne x^2+1$. Claim that $H_{1,f}^*(n)<1$.
Then by Lemma 2.1, one knows that $0<H_{k,f}^*(n)<1$ for all
integers $k$ with $1\le k\le n$. Since the coefficients of $f(x)$
are nonnegative integers, for any positive integer $r$ and $t$,
we have $f(r)\ge 1$, so $f(r)^t \ge f(r)$. Hence we can deduce that
$$0<H_{k,f}^*(\vec{s},n)\le H_{k,f}^*(n)<1.$$
In other words, $H_{k,f}^*(\vec{s},n)$ is not an integer for all
integers $k$ with $1\le k\le n$. It remains to show
the truth of the claim that will be done in what follows.

One lets $f(x)=a_mx^m+a_{m-1}x^{m-1}+\cdots+a_1x+a_0$
with $m\ge 2$ being an integer, $a_m\ge 1$,
$\max(a_0, \cdots, a_{m-1})\geq 1$ and $f(x)\ne x^2+1$.
Consider the following subcases.

{\sc Case 2.1.} $m=2$, $a_1=0$, $ a_0 \ge 1$ and $\max(a_2, a_0)\geq 2$.
Then for any positive integer $j$, one can derive that
$f(j)\ge a_2 j^2+a_0 \geq j^2+2$. It follows from (\ref{eq11}) that
\begin{align*}
H_{1,f}^*(n)=\sum\limits_{j=1}^{n}\frac{1}{f(j)}< \sum\limits_{j=1}^\infty\frac{1}{f(j)}
\le \sum\limits_{j=1}^\infty\frac{1}{j^2+2}<1.
\end{align*}
Hence the claim is true in this case.

{\sc Case 2.2.} $m=2$ and $ a_1\ge 1$. Since $a_2\ge 1$ and $a_1\ge 1$, we deduce that
$f(j)\ge j^2+j$ for any positive integer $j$. So
\begin{align} \label{eq4}
H_{1,f}^*(n)=\sum\limits_{j=1}^{n}\frac{1}{f(j)}\le \sum\limits_{j=1}^{n}\frac{1}{j(j+1)}
=\sum\limits_{j=1}^{n}\Big(\frac{1}{j}-\frac{1}{j+1}\Big)=1-\frac{1}{n+1}<1
\end{align}
as claimed. The claim holds in this case.

{\sc Case 2.3.} $m\geq 3$. Since $\max(a_0, \cdots, a_{m-1})\geq 1$,
we derive that $f(j)\geq j^3+1\ge j^2+j$ for any positive integer $j$.
Thus (\ref{eq4}) keeps valid. So the claim is proved in this case.

{\sc Case 3.} $f(x)=x^2+1$. Notice that $s_1$ is the first component
of $\vec{s}$. We divide the proof into the following subcases.

{\sc Case 3.1.} $k=1$ and $s_1=1$. Then we have
$$H_{1,f}^*(\vec{s},n)=\sum\limits_{i=1}^n\dfrac{1}{i^2+1}$$
increases as $n$ increases. By some computations,
we find that $H_{1,f}^*(\vec{s},12)<1$
and $H_{1,f}^*(\vec{s},13)>1$. So if $n\ge 13$, then
$$1<H_{1,f}^*(\vec{s},13)\leq H_{1,f}^*(\vec{s},n)<\sum_{i=1}^n\frac{1}{i^2}<\zeta(2)<2.$$
On the other hand, if $2\le n\le 12$, then
$$\frac{1}{2}< H_{1,f}^*(\vec{s},n)\le H_{1,f}^*(\vec{s},12)<1.$$
So we can conclude that $H_{1,f}^*(\vec{s}, n)$ is not an integer in this case.

{\sc Case 3.2.} $k=1$ and $s_1 >1$. Clearly, one has
$$0<H_{1,f}^*(\vec{s},n)=\sum\limits_{i=1}^n\frac{1}{f(i)^{s_1}}
\le \sum\limits_{i=1}^n\frac{1}{(i^2+1)^2}
<\frac{1}{4}+\sum\limits_{i=2}^n\frac{1}{i^4}<\zeta(4)-\frac{3}{4}<1.$$
In other words, $H_{k,f}^*(\vec{s},n)$ is not an integer if $k=1$ and $s_1 >1$.

{\sc Case 3.3.} $k >1$. Since $f(r)^t\ge f(r)>0$ for any positive integers $r$
and $t$, we can deduce that $0<H_{k,f}^*(\vec{s},n)\le H_{k,f}^*(n)$. We claim that
$H_{k,f}^*(n)<1$ if $k>1$. Then it follows that $0<H_{k,f}^*(\vec{s},n)<1$ which
means that $H_{k,f}^*(\vec{s},n)$ is not an integer if $k>1$. In the following,
we show the truth of the claim. Its proof is divided into two subcases.

{\sc Case 3.3.1.} $k=2$. Then $n\ge 2$. If $n=2$, then we can easily
compute that $H_{2,f}^*(2)=\frac{39}{100}<1$ as claimed. Now let $n\ge 3$.
Then
\begin{align*}
0<H_{2,f}^*(n)=&\sum\limits_{1\leq i\leq j\leq n}\frac{1}{(i^2+1)(j^2+1)}\\
=&\sum\limits_{i=1}^n\frac{1}{(i^2+1)^2}+\sum\limits_{1\leq i<j\leq n}\frac{1}{(i^2+1)(j^2+1)}\\
=&\frac{1}{4}+\frac{1}{25}+\sum\limits_{i=3}^n\frac{1}{i^4+2i^2+1}+\sum_{j=2}^n\frac{1}{2(j^2+1)}\\
&+\sum_{j=3}^n\frac{1}{5(j^2+1)}+\sum\limits_{3\leq i<j\leq n}\frac{1}{(i^2+1)(j^2+1)}.
\end{align*}
Since
\begin{align}\label{eq80}\sum\limits_{i=3}^n\frac{1}{i^4+2i^2+1}<\sum_{i=3}^\infty\frac{1}{i^4}=
\sum_{i=1}^\infty\frac{1}{i^4}-1-\frac{1}{16}=\zeta(4)-\frac{17}{16},
\end{align}

\begin{align}\label{eq5}\sum_{j=2}^n\frac{1}{2(j^2+1)}
<\sum_{j=2}^\infty\frac{1}{2j^2}=\frac{1}{2}\big(\zeta(2)-1\big),
\end{align}
\begin{align}\label{eq6}
\sum_{j=3}^n\frac{1}{5(j^2+1)}<\sum_{j=3}^\infty\frac{1}{5j^2}=\frac{1}{5}\Big(\zeta(2)-\frac{5}{4}\Big)
\end{align}
and Lemma 2.3 tells us that
\begin{align}\label{eq7}
\sum\limits_{3\leq i<j\leq n}\frac{1}{(i^2+1)(j^2+1)}
<&\sum\limits_{3\leq i<j\leq n}\frac{1}{i^2j^2}\nonumber\\
<&\frac{1}{2}\Big(\zeta(2)^2-\frac{5}{2}\zeta(2)-\zeta(4)+\frac{25}{16}+\frac{17}{16}\Big)\nonumber\\
=&\frac{1}{2}\Big(\zeta(2)^2-\frac{5}{2}\zeta(2)-\zeta(4)+\frac{21}{8}\Big),
\end{align}
by (\ref{eq80}),(\ref{eq5}),(\ref{eq6}) and (\ref{eq7}), one can derive that
\begin{align*}
H_{2,f}^*(n)&<
\frac{1}{4}+\frac{1}{25}+\Big(\zeta(4)-\frac{17}{16}\Big)
+\frac{1}{2}\big(\zeta(2)-1\big)+\frac{1}{5}\Big(\zeta(2)-\frac{5}{4}\Big)\\
&+\frac{1}{2}\Big(\zeta(2)^2-\frac{5}{2}\zeta(2)-\zeta(4)+\frac{21}{8}\Big)\\
&=\frac{1}{2}\zeta(4)+\frac{1}{2}\zeta(2)^2-\frac{11}{20}\zeta(2)-\frac{21}{100}\\
&=\frac{7\pi^4}{360}-\frac{11\pi^2}{120}-\frac{21}{100}<1.
\end{align*}

Thus one can conclude that $H_{2,f}^*(n)<1$ is not an integer if $n\ge 2$.
The claim is proved in this case.

{\sc Case 3.3.2.} $k\geq 3$. Since $f(x)=x^2+1$, it follows that if $n=3$, then
$$
0<\frac{1}{f(1)}+\sum\limits_{i=3}^n\frac{1}{f(i)}=\frac{1}{1^2+1}+\frac{1}{3^2+1}=\frac{3}{5}<1,
$$
and if $n\ge 4$, then
\begin{align*}
0&<\frac{1}{f(1)}+\sum\limits_{i=3}^n\frac{1}{f(i)}\\
&<\frac{1}{1^2+1}+\frac{1}{3^2+1}+\sum_{i=4}^n\frac{1}{i^2}\\
&=\frac{1}{1^2+1}+\frac{1}{3^2+1}-1-\frac{1}{4}-\frac{1}{9}+\sum\limits_{i=1}^n\frac{1}{i^2}\\
&<\zeta(2)-\frac{137}{180}<1.
\end{align*}
But $f(2)^2=25>20=f(1)f(3)$. Thus by Lemma \ref{lem2}, we infer that
$H_{k,f}^*(n)<H_{k-1,f}^*(n)$, which implies that $H_{k,f}^*(n)$
is decreasing as $k$ increases. Hence $0<H_{k,f}^*(n)< H_{2,f}^*(n)<1$.
The claim is true in this case.

{\sc Case 4.} $f(x)=ax+b$ with $a,b \ge1$ and $\vec{s}=(s_1, ..., s_k)$
is a $k$-tuple of positive integers such that $s_j \ge 2$ for all
integers $j$ with $1\le j\le k$. For any positive integer $r$,
we derive that $f(r)^{s_j}\ge (r+1)^2$. So we have
\begin{align}\label{eq8}
H_{k,f}^*(\vec{s},n)=\sum\limits_{1\leq i_{1}\leq \cdots\leq i_{k}\leq n}
\prod\limits_{j=1}^{k}\frac{1}{f(i_{j})^{s_j}}
\le\sum\limits_{1\leq i_{1}\leq \cdots\leq i_{k}\leq n}
\prod\limits_{j=1}^{k}\frac{1}{(i_j+1)^2}
=H_{k,g}^*(n),
\end{align}
where $g(x)=x^2+2x+1$.

But we have shown in Case 2 that $0<H_{k,q}^*(n)<1$ for all
integers $k$ with $1\le k\le n$ if $\deg q(x)\ge 2$ and $q(x)$
contains at least two terms as well as $q(x)\ne x^2+1$.
Hence $0<H_{k,g}^*(n)<1$. Then from
(\ref{eq8}) one derives that $0<H_{k,f}^*(\vec{s},n)<1$.
Thus $H_{k,f}^*(\vec{s},n)$ is not an integer in this case.

This concludes the proof of Theorem \ref{thm}.
\hfill$\Box$

\section{Proof of Theorem \ref{thm2}}

In this section, we present the proof of Theorem \ref{thm2}.\\

{\it Proof of Theorem \ref{thm2}.}
It is clear that $H_{k,f}^*(\vec{s},n)=H_{k,f}(\vec{s},n)=1$
if $n=1$. In what follows we let $n\ge 2$.

First of all, we show that $H_{k,f}^*(\vec{s},n)$ is not an integer if $n\ge 2$.
By Bertrand's postulate, we know that there exists at
least one prime $p$ such that $\frac{2n-1}{2}<p\le 2n-1$.
Then $p\le 2n-1<2p$. One can write $p:=2r-1$. Thus
\begin{align*}
&H_{k,f}^*(\vec{s},n)\\
=&\sum\limits_{1\le i_1\le \cdots\le i_k\le n
\atop \forall j, 2i_j-1=p}\frac{1}{(2i_1-1)^{s_1}\cdots (2i_k-1)^{s_k}}+
\sum\limits_{1\le i_1\le \cdots\le i_k\le n
\atop \exists j, 2i_j-1\neq p}\frac{1}{(2i_1-1)^{s_1}\cdots (2i_k-1)^{s_k}}\\
=&p^{-\sum\limits_{i=1}^ns_i}+\frac{a}{bp^t},
\end{align*}
where $\gcd(a,b)=\gcd(a,p)=\gcd(b,p)=1$ and $t<\sum\limits_{i=1}^ns_i$.
It follows that
$$v_p(H_{k,f}^*(\vec{s},n))=-\sum\limits_{i=1}^ns_i<0.$$
Thus $H_{k,f}^*(\vec{s},n)$ is not an integer if $n\ge 2$ as desired.

In the remaining part of the proof, we show that $H_{k,f}(\vec{s},n)$
is not an integer. Let $k=1$. If $s_1=1$, then it is well known that
$H_{k,f}(\vec{s},n)=\sum_{i=1}^n\frac{1}{2i-1}$ is not an integer. Now let $s_1\ge 2$.
If $i\ge 2$, then $(2i-1)^{s_1}\ge(2i-1)^2 >2i^2$. It follows that
$$1< H_{k,f}(\vec{s},n)=1+ \sum\limits_{i=2}^n\frac{1}{(2i-1)^{s_1}}<1+\frac{1}{2}
\sum\limits_{i=2}^n\frac{1}{i^2}<\frac{1}{2}+\frac{\pi^2}{12}<2,$$
which implies that $H_{k,f}(\vec{s},n)$ is not an integer if $k=1$ and $s_1\ge 2$.
Hence $H_{k,f}(\vec{s},n)$ is not an integer if $k=1$.

Subsequently, let $k \ge \frac{e}{2}\log(2n+1)+e$. Then by Lemma \ref{lem4},
$0< H_{k,f}(\vec{s},n)<1$. So $H_{k,f}(\vec{s},n)$ is not an integer.

Finally, let $2\le k < \frac{e}{2}\log(2n+1)+e$.
We divide the proof into two cases.

{\sc Case 1.} $n> 62801$. Then Lemma \ref{lem6} guarantees
the existence of a prime $p$ with
$\frac{n}{k+\frac{1}{2}}<p\le \frac{n}{k}$ and $p> 2k$.
By Lemma \ref{lem7}, we have
$v_p\big(H_{k,f}(\vec{s},n)\big)=-\sum_{i=1}^ks_i<0,$
which implies that $H_{k,f}(\vec{s},n)$ is not an integer.

{\sc Case 2.} $2\le n\le 62801$. Then $2\le k\le 18$ since
$2\le k < \frac{e}{2}\log(2n+1)+e$. Let $p_i$ denote the $i$th prime.
Note that $p_{6302}=62801$. For any integer $k$ with $2\le k\le 18$,
we let $p(k)$ denote the largest prime number $p_j$ satisfying
$kp_j\ge (k+\frac{1}{2})p_{j-1}$ and $2 \le j\le 6302$.
Define $n_k:=kp(k)-1$ for any integer $k$ with $2\le k\le 18$.
We claim that for any integer $k$ with $2\le k\le 18$,
if $n_k+1\le n\le 62801$, then there is a prime number
$p$ such that $\frac{n}{k+\frac{1}{2}}<p\le \frac{n}{k}$.
Actually, if $\frac{n}{k+\frac{1}{2}}<p(k)$, then
$\frac{n}{k+\frac{1}{2}}<p(k) \le \frac{n}{k}$
since $n_k+1\le n$. So the claim holds in this situation.
If $\frac{n}{k+\frac{1}{2}}\ge p(k)$, then we can deduce
that there is a prime $p_i\ge p(k)$ such that
$p_i \le \frac{n}{k+\frac{1}{2}}< p_{i+1}$. Then by the assumption that
$p(k)$ is the largest prime $p_j$ such that $kp_j\ge (k+\frac{1}{2})p_{j-1}$,
one derives that $kp_{i+1}<(k+\frac{1}{2})p_i$. So $kp_{i+1}<(k+\frac{1}{2})p_i\le n$
since $p_i \le \frac{n}{k+\frac{1}{2}}$. Letting $p:=p_{i+1}$ gives us
that $\frac{n}{k+\frac{1}{2}}<p< \frac{n}{k}$ as claimed. The claim is proved.

If $n\ge n_k+1$, then by the claim above, we know that there
exists a prime $p$ such that $\frac{n}{k+\frac{1}{2}}<p\le \frac{n}{k}$.
Using Maple 17, we compute the values of all the $p(k)$ and $n_k$ for $2\le k\le 18$
given in Table \ref{tbf1}. We have $p>\frac{n_k+1}{k+\frac{1}{2}}\ge 2k$
since $p$ is a prime such that
$p>\frac{n}{k+\frac{1}{2}}$ and $n\ge n_k+1$. Then applying Lemma \ref{lem7}
we know that $H_{k,f}(\vec{s},n)$ is not an integer.
So Theorem \ref{thm2} is true if $n\ge n_k+1$.

Now let $n\le n_k$. Since $2\le k\le 18$, using Maple 17,
we can calculate $H_{k,f}(n_k)$ for all integers $k$ with $2\le k\le 18$
which are given in the Table 2. From Table 2, one can read that
$H_{k,f}(n_k)$ is not an integer for all integers $k$ with $2\le k\le 18$.
Further, we can easily read from Table 2 that $H_{k,f}(n_k)<1$ if $k\ge 9$.
But $H_{k,f}(n)\le H_{k,f}(n_k)$ if $n\le n_k$, and note that
$H_{k,f}(\vec{s},n)\le H_{k,f}(n)$ for any $k$-tuple
$\vec{s}=(s_1, ..., s_k)$ of positive integers. Thus
$0<H_{k,f}(\vec{s},n)\le H_{k,f}(n)<1$ if $9\le k\le 18$ and $n\le n_k$.
In other words, $H_{k,f}(\vec{s},n)$ is not an integer
if $9\le k\le 18$ and $k\le n\le n_k$.

\begin{table}
\centering \caption{Evaluations of $p(k)$ and $n_k$ with respect
to $ 2 \le k\le 18$.}\label{tbf1}
\begin{tabular}{|c|r|r|r|r|r|r|r|r|r|}
 \hline
$k$  &2  &3 &4 &5 & 6 & 7 & 8 & 9 & 10 \\
\hline
$p(k)$  &29  & 37 & 53 & 127 & 127 & 149 & 149 & 223 & 223 \\
\hline
$n_k:=kp(k)-1$ &57 & 110 & 211 & 634 & 761 & 1042 & 1191 & 2006 & 2229 \\
\hline
$k$ & 11 & 12 & 13 & 14 & 15 & 16 & 17 & 18 & \\
\hline
$p(k)$ & 307 & 331 & 331 & 331 & 541 & 541 & 541 & 541 & \\
\hline
$n_k:=kp(k)-1$ & 3376 & 3971 & 4302 & 4633 & 8114 & 8655 & 9196 & 9737 & \\
\hline
\end{tabular}
\end{table}

\begin{table}\label{tbf2}
\centering \caption{Evaluations of $H_{k,f}(n_k)$ with respect to $ 2 \le k\le 18$.}
\begin{tabular}{|c|r|r|r|r|r|r|}
  \hline
  $k$ & 2 & 3 & 4 & 5 & 6 & 7\\
  \hline
  $H_{k,f}(n_k)$ & 3.89... & 4.46... & 4.55... & 6.16... & 3.99... & 2.61...\\
  \hline
  $k$  & 8 & 9 & 10 & 11 & 12 & 13\\
  \hline
   $H_{k,f}(n_k)$  & 1.30... & 0.95... & 0.40...&0.24... & 0.10... & 0.03...\\
  \hline
  $k$  & 14 & 15 & 16 & 17 & 18 & \\
  \hline
  $H_{k,f}(n_k)$  & 0.01... & 0.008... & 0.0024... & 0.00067... & 0.00018...& \\
  \hline

\end{tabular}
\end{table}

It remains to show that $H_{k,f}(\vec{s},n)$ is not an integer if $2\le k\le 8$
and $2\le n\le n_k$. In what follows, we let $2\le k\le 8$ and $2\le n\le n_k$.

Let $s_1=\cdots=s_k=1$. Then by Maple 17 (see the Appendix) we can compute
and find that $H_{k,f}(n)$ is not an integer.

Let $s_{j_0}\ge 2$ for some integer $j_0$ with $2\le j_0\le k$.
Define a $k$-tuple $\vec{s'}=(s_1', ..., s_k')$ of positive integers
by $s_2'=2$ and $s_1'=s_3'=\cdots=s_k'=1$. Let $i_1, ..., i_k$ be integers
such that $1\le i_1<...<i_k\le n$. If $j_0=2$, then $s_{j_0}=s_2\ge 2$ and so
$f(i_{j_0})^{s_{j_0}}=(2i_{j_0}-1)^{s_{j_0}}\ge (2i_{j_0}-1)^{2}
=f(i_{j_0})^{s'_{j_0}}>0$.
If $j_0\ge 3$, then $s_2+s_{j_0}-1\ge 2$ since $s_2\ge 1$ and $s_{j_0}\ge 2$.
It follows from $f(i_{j_0})\ge f(i_2)>0$ that
$f(i_2)^{s_2}f(i_{j_0})^{s_{j_0}}\ge f(i_2)^{s_2+s_{j_0}-1}f(i_{j_0})
\ge f(i_2)^{s'_2} f(i_{j_0})^{s'_{j_0}}>0$. Since $s_j\ge 1=s'_j$
if $j\ne 2$, we deduce that
$\prod_{j=1}^k f(i_j)^{s_j}\ge \prod_{j=1}^k f(i_j)^{s'_j}>0$.
Therefore
$$H_{k,f}(\vec{s},n)
=\sum\limits_{1\leq i_{1}<\cdots<i_{k}\le n}
\prod\limits_{j=1}^{k}\frac{1}{f(i_{j})^{s_j}}
\le\sum\limits_{1\leq i_{1}<\cdots<i_{k}\le n}
\prod\limits_{j=1}^{k}\frac{1}{f(i_{j})^{s'_j}}
=H_{k,f}(\vec{s'},n).$$
Note that $H_{k,f}(\vec{s'},n)\le H_{k,f}(\vec{s'},n_k)$ since
$n\le n_k$, and we can calculate that $H_{k,f}(\vec{s'},n_k)<1$.
Hence $0<H_{k,f}(\vec{s},n)<1$ if $s_{j_0}\ge 2$
for some integer $j_0$ with $2\le j_0\le k$.

In the following, we let $s_1\ge 2$ and $s_2=\cdots=s_k=1$.
Since $2\le k\le 8$ and $2\le n\le n_k$, one has $n\le n_8=1191$.
We observe the prime distribution less than $2381$.
Note that $p_{353}=2381$. By a simple calculation, one concludes that
$\max_{2\le i\le 353}\{p_i-p_{i-1}\}=34$.

We assert that if $48 \le n \le 1191$, then there exists
an odd prime $q_n$ depending on $n$ such that $14\le (2n-1)-q_n \le 46$. In fact,
let $q_n'$ be the largest prime less than $2n-1$. If $(2n-1)-q_n' \ge 14$,
then $(2n-1)-q_n' \le 34$. Thus $q_n=q_n'$ gives us the claim.
If $(2n-1)-q_n' < 14$, then we can find the largest prime $q_n$ such that $q_n<q_n'$
and $(2n-1)-q_n \ge 14$.
If there is no prime $q_n''$ such that $q_n<q_n''<q_n'$, then
$$14\le (2n-1)-q_n \le (2n-1)-q_n'+q_n'-q_n\le 46.$$
If there exists a prime which is less than $q_n'$ and greater that $q_n$,
then we let $q_n'''$ be the largest one. Thus $2n-1-q_n'''\le 12$.
By $\max_{2\le i\le 353}\{p_i-p_{i-1}\}=34$,
we have $q_n'''-q_n \le 34$. It then follows that
$$14\le (2n-1)-q_n \le (2n-1)-q_n'''+q_n'''-q_n\le 46.$$
as desired. This finishes the proof of the assertion.

For $2\le k\le 8$, we let $48 \le n \le n_k$. If $2\le s_1\le 29$,
then using Maple 17 (see the Appendix), we can calculate and find that
$H_{k,f}(\vec{s},n)$ is not an integer.

In the following, we let $s_1\ge 30$. Since $48\le n \le n_k \le 1191$,
we can find a prime $q_n$ such that $2n-47 \le q_n \le 2n-15$ by the
assertion. This implies that $q_n\ge 53$ and $q_n<q_n+14\le 2n-1\le q_n+46<2q_n$.
In the following, we can use such prime $q_n$ to split $H_{k,f}(\vec{s},n)$ into two parts:
$H_{k,f}(\vec{s},n)=H_1+H_2$, where
\begin{align*}
H_1=\sum\limits_{1\le i_1<\cdots<i_k\le n
\atop 2i_1-1=q_n}\prod\limits_{j=1}^k\frac{1}{(2i_j-1)^{s_j}}
\end{align*}
and
$$H_2=\sum\limits_{1\le i_1<\cdots<i_k\le n
\atop 2i_1-1 \neq q_n}\prod\limits_{j=1}^k\frac{1}{(2i_j-1)^{s_j}}.$$

We compute the $q_n$-adic valuation  of $H_2$.
Since $q_n$ is a prime such that $q_n< 2n-1<2q_n$, it is easy to see that
$v_{q_n}(H_2)\ge -1$. If one can show that $v_{q_n}(H_1)\le -2$,
then it follows that $v_{q_n}(H_{k,f}(\vec{s},n))=\min\{v_{q_n}(H_1),v_{q_n}(H_2)\}\le -2$,
which implies that $H_{k,f}(\vec{s},n)$ is not an integer.

In what follows we show that $v_{q_n}(H_1)\le -2$. By the assumption, one has
\begin{align*}
H_1=\sum\limits_{1\le i_1<\cdots<i_k\le n\atop 2i_1-1=q_n}
\prod\limits_{j=1}^k\frac{1}{(2i_j-1)^{s_j}}
=\frac{1}{q_n^{s_1}}\sum\limits_{\frac{q_n+1}{2}<i_2<\cdots<i_k\le n}
\prod\limits_{j=2}^k\frac{1}{2i_j-1}.
\end{align*}
Since $q_n$ is an odd integer and $\frac{q_n+1}{2}<i_2<\cdots<i_k\le n$,
it follows that all of $2i_2-1,\cdots,2i_k-1$ are contained in the
set $\{q_n+2,\cdots,2n-1\}$. Then one can write
\begin{align*}
H_1=\frac{H_1'}{q_n^{s_1}(q_n+2)(q_n+4)\cdots (2n-1)},
\end{align*}
where
$$H_1':=\sum\limits_{1\le i_1<\cdots<i_{n+1-k-\frac{q_n+1}{2}}
\le n-\frac{q_n+1}{2}}(q_n+2i_1)\cdots \big(q_n+2i_{n+1-k-\frac{q_n+1}{2}}\big).$$
So we conclude that there are $\dbinom{n-\frac{q_n+1}{2}}{k-1}$ terms in $H_1'$.
The above assertion tells us that $q_n+14\le 2n-1\le q_n+46$. But $2\le k\le 8$.
Then we derive that
$$\dbinom{n-\frac{q_n+1}{2}}{k-1}\le \dbinom{23}{k-1}
\le\dbinom{23}{7}=245157$$ and $$n-\frac{q_n+1}{2}-(k-1)\le 22.$$
Therefore
\begin{align}\label{eq22}
H_1'< \dbinom{n-\frac{q_n+1}{2}}{k-1}(2n-1)^{n-\frac{q_n+1}{2}-(k-1)}\le 245157(q_n+46)^{22}.
\end{align}

Let $H_1'=aq_n^t$ with $a$ coprime to $q_n$.
Note that $q_n \ge 53$. We can check that $245157(53+46)^{22}<53^{s_1-1}$.
It is easy to see that for any integer $r\ge 23$ and all real numbers $a_1$
and $a_2$ with $0<a_2<a_1$, one has
$$\Big(\frac{a_2}{a_1}\Big)^{r}<\Big(\frac{a_2}{a_1}\Big)^{22}<\Big(\frac{a_2+46}{a_1+46}\Big)^{22},$$
which implies that $\frac{(a_1+46)^{22}}{a_1^{r}}<\frac{(a_2+46)^{22}}{a_2^{r}}$.
In particular, if $s_1\ge 30$, then
$$\frac{245157(q_n+46)^{22}}{q_n^{s_1-1}}\le \frac{245157(53+46)^{22}}{53^{s_1-1}}<1.$$
So $245157(q_n+46)^{22}<q_n^{s_1-1}$. Thus by (\ref{eq22}), we have
$q_n^t\le H_1'=aq_n^t<q_n^{s_1-1}$ which implies that $ v_{q_n}(H_1')=t\le s_1-2$.
Hence $v_{q_n}(H_1)\le -2$ if $s_1\ge 30$. Therefore $H_{k,f}(\vec{s},n)$
is not an integer if $48\le n \le n_k$ and $s_1\ge 30$.

Finally, we let $k\le n\le 47$.
If $5\le k\le 8$, then we can read from Table 3 that $0< H_{k,f}(47)<1$.
So $0<H_{k,f}(n)<1$ for $k\le n \le 47$. It follows that $0< H_{k,f}(\vec{s},n)<1$.
Namely, $H_{k,f}(\vec{s},n)$ is not an integer if $5\le k\le 8$ and $k\le n\le 47$.
\begin{table}
\centering \caption{Evaluations of $H_{k,f}(47)$ with respect
to $ 5 \le k\le 8$.}\label{tbf3}
\begin{tabular}{|r|r|r|r|r|}
  \hline
  $k$ & 5 & 6 & 7 & 8 \\
  \hline
  $H_{k,f}(47)$ & 0.49... & 0.14... & 0.032... & 0.0060... \\
  \hline
\end{tabular}
\end{table}

Now let $2\le k \le 4$ and $12\le n\le 47$.
If $2\le s_1\le 6$, then using Maple 17, we can compute
and find that $H_{k,f}(\vec{s},n)$ is not an integer.
In the following, we suppose that $s_1\ge 7$.
Since $12\le n\le 47$, there always
exists an odd prime $l_n$ such that $2n-11\le l_n\le 2n-7$.
It then follows that $13\le l_n\le 83$ and
$$l_n<l_n+6\le 2n-1\le l_n+10<2l_n.$$
Using such prime $l_n$, we divide the sum $H_{k,f}(\vec{s},n)$ into two parts:
$H_{k,f}(\vec{s},n)=H_3+H_4$, where
\begin{align*}
H_3:=\sum\limits_{1\le i_1<\cdots<i_k\le n
\atop 2i_1-1=l_n}\prod\limits_{j=1}^k\frac{1}{(2i_j-1)^{s_j}}
\end{align*}
and
$$H_4:=\sum\limits_{1\le i_1<\cdots<i_k\le n
\atop 2i_1-1 \neq l_n}\prod\limits_{j=1}^k\frac{1}{(2i_j-1)^{s_j}}.$$
Clearly, we can find that $v_{l_n}(H_4)\ge -1$ since $l_n< 2n-1<2l_n$.
If $v_{l_n}(H_3)\le -2$, then
$v_{l_n}(H_{k,f}(\vec{s},n))=\min\{v_{l_n}(H_3),v_{l_n}(H_4)\}\le -2<0$.
This implies that $H_{k,f}(\vec{s},n)$ is not an integer.
In the following, we compute the $l_n$-adic valuation of $H_3$.
Evidently, $H_3$ can be written as:
$$H_3=\frac{H'_3}{l_n^{s_1}(l_n+2)\cdots (2n-1)},$$
where $$H'_3:=\sum\limits_{1\le i_1<\cdots<i_{n+1-k-
\frac{l_n+1}{2}}\le n-\frac{l_n+1}{2}}(l_n+2i_1)\cdots(l_n+2i_{n+1-k-\frac{l_n+1}{2}}).$$
Since $l_n+6\le 2n-1\le l_n+10$ and $2\le k\le 4$, one has
$3 \le n-\frac{l_n+1}{2}\le 5$ and $n+1-k-\frac{l_n+1}{2}\le 4$.
Then we obtain that
\begin{align}\label{eq23}
H'_3<\dbinom{5}{2}(l_n+10)^{n+1-k-\frac{l_n+1}{2}}\le 10(l_n+10)^4.
\end{align}
Let $H'_3:=bl_n^m$. It is easy to check that if $13\le l_n\le 83 $,
then $10(l_n+10)^4<l_n^{s_1-1}$ for $s_1\ge 7$. By (\ref{eq23}),
we have $l_n^m\le H'_3< l_n^{s_1-1}$, which implies that
$m=v_{l_n}(H'_3)\le s_1-2$. Hence $v_{l_n}(H_3)\le -2$ if $s_1\ge 7$.
So we conclude that $v_{l_n}(H_{k,f}(\vec{s},n))\le -2$.
In other words, $H_{k,f}(\vec{s},n)$ is not an integer if $s_1\ge 7$.

Last, we need to show that if $2\le k\le 4$ and $n\le 11$,
then $H_{k,f}(\vec{s},n)$ is not an integer.
If $k=3,4$, a direct computation yields that $H_{k,f}(n)<1$,
which tells us that $0<H_{k,f}(\vec{s},n)\le H_{k,f}(n)<1$ and so
$H_{k,f}(\vec{s},n)$ is not an integer if $k=3$ and 4.
Now let $k=2$. Obviously, $H_{k,f}(\vec{s},n)$ is not an integer
if $n=2$. Now let $n\in\{3, 4, 5, 7, 8, 10, 11\}$.
Then $Q_n:=2n-3$ is a prime number and $Q_n \ge 3$.
Let us now split $H_{k,f}(\vec{s},n)$ into the following two sums:
$$H_{k,f}(\vec{s},n)=H_5+H_6,$$
where
$$H_5:=\frac{1}{Q_n^{s_1}(Q_n+2)} \ \ {\rm and} \ \
H_6:=\sum\limits_{1\le i_1<i_2\le n \atop i_1
\neq\frac{Q_n+1}{2}}\frac{1}{(2i_1-1)^{s_1}(2i_2-1)}.$$
Evidently, $Q_n\nmid (Q_n+2)$ since $Q_n\ge 3$.
Since $s_1\ge 2$, we have $v_{Q_n}(H_5)=-s_1\le -2$
and $v_{Q_n}(H_6)\ge -1$. It then follows that
$v_{Q_n}(H_{k,f}(\vec{s},n))=-s_1\le -2$, which implies
that $H_{k,f}(\vec{s},n)$ is not an integer if
$n\in \{3,4,5,7,8,10, 11\}$ and $k=2$.
If $n=6$, then it is easy to check that
$H_{k,f}(\vec{s},6)$ is not an integer if $s_1=2$.
If $s_1\ge 3$, then
$$H_{k,f}(\vec{s},6):=\frac{1}{9^{s_1}\cdot11}+H_7,$$
where
$$H_7:=\sum\limits_{1\le i_1<i_2\le 6
\atop i_1\neq5}\frac{1}{(2i_1-1)^{s_1}(2i_2-1)}.$$
Clearly, one has
$$H_7=\frac{1}{3^{s_1+2}}+\frac{1}{3^{s_1}}\sum\limits_{3\le i_2\le 6\atop i_2\neq5}\frac{1}{2i_2-1}
+\frac{1}{3}+
\frac{1}{9}\sum\limits_{1\le i_1\le 4\atop i_1\neq 2}\frac{1}{(2i_1-1)^{s_1}}+
\sum\limits_{1\le i_1<i_2\le 6\atop i_1,i_2\notin \{2,5\}}\frac{1}{(2i_1-1)^{s_1}(2i_2-1)}.$$
Then we can find that $v_3(H_7)= -(s_1+2)$. It is clear that $v_3(\frac{1}{9^{s_1}\cdot11})=-2s_1$.
By $s_1\ge 3$, one derives that $-(s_1+2)>-2s_1$. It then follows that
$$v_3(H_{k,f}(\vec{s},6))= \min\{v_3\Big(\frac{1}{9^{s_1}\cdot11}\Big), v_3(H_7)\}=-2s_1<0.$$
Then $H_{k,f}(\vec{s},6)$ is not an integer.

Let $n=9$. First, we can easily check that $H_{k,f}(\vec{s},9)$ is not an integer if $s_1=2$.
Next, we let $s_1\ge 3$. Then $2i_1-1$ and $2i_2-1$ are contained in the
set $\{1,3,5,7,9,11,13,15,17\}$. Hence we can write $H_{k,f}(\vec{s},9)$ as follows:
$$H_{k,f}(\vec{s},9)=\frac{1}{9^{s_1}\cdot15}+H_8,$$
where
\begin{align*}
H_8=&\frac{1}{3}+\frac{1}{3^{s_1}}\Big(\frac{1}{9}+\frac{1}{15}\Big)+
\frac{1}{3^{s_1}}\sum\limits_{3\le i_2\le 9\atop i_2\neq 5,8}\frac{1}{2i_2-1}+
\frac{1}{9^{s_1}}\sum\limits_{6\le i_2\le 9\atop i_2\neq 8}\frac{1}{2i_2-1}
+\frac{1}{9}\sum\limits_{1\le i_1\le 4\atop i_1\neq 2}\frac{1}{(2i_1-1)^{s_1}}\\
&+\frac{1}{15^{s_1}\cdot 17}+\frac{1}{15}\sum\limits_{1\le i_1\le 7\atop i_1\neq 2,5}\frac{1}{(2i_1-1)^{s_1}}
+\sum\limits_{1\le i_1<i_2\le 9\atop i_1,i_2\notin \{2, 5, 8\}}\frac{1}{(2i_1-1)^{s_1}(2i_2-1)}.
\end{align*}
Since $s_1+2< 2s_1$, it then follows that
$$v_3(H_8)=\min \{v_3\Big(\frac{1}{3^{s_1+2}}\Big), v_3\Big(\frac{1}{9^{s_1}}
\sum\limits_{6\le i_2\le 9\atop i_2\neq 8}\frac{1}{2i_2-1}\Big)\}=-2s_1.$$
So one derives that $v_3(H_8)\ge -2s_1$ if $s_1\ge 3$.
It is not hard to see that $v_3\big(\frac{1}{9^{s_1}\cdot15}\big)=-2s_1-1$.
Hence one concludes that if $s_1\ge 3$, then
$$v_3(H_{k,f}(\vec{s},9))=\min\{v_3\Big(\frac{1}{9^{s_1}\cdot15}\Big), v_3(H_8)\}=-2s_1-1<0,$$
from which it follows that $H_{k,f}(\vec{s},9)$ is not an integer.
This finishes the proof of Theorem \ref{thm2}.

\section{Final remark}
Let $n$ and $k$ be integers with $1\le k\le n$ and
$f(x)$ be a nonzero polynomial of nonnegative integer coefficients.
Let $\vec{s}=(s_1, ..., s_k)$ be a $k$-tuple of positive integers.
Then from Theorems 1.1 and 1.2 of this paper
and the results presented in \cite{[HW]} and \cite{[KH]},
one can read that both of $H_{k,n}(\vec{s},f)$
and $H_{k,n}^*(\vec{s},f)$ are almost non-integers if deg$f(x)\ge 2$,
or deg$f(x)=1$ and $s_i\ge 2$ for all integers $i$ with $1\le i\le k$,
or deg$f(x)=1$ and $s_i=1$ for all integers $i$ with $1\le i\le k$,
or $f(x)\in \{x, 2x-1\}$. But if deg$f(x)=1$, $f(x)\not\in \{x, 2x-1\}$
and there are indexes $i$ and $j$ between 1 and $k$ such that $s_i=1$
and $s_j\ge 2$, then does the similar result hold for both of
$H_{k,f}(\vec{s}, n)$ and $H_{k,f}^*(\vec{s}, n)$? Unfortunately,
this problem seems hard to answer in general and is still kept open so far.

In the following, we let $f(x)$ be a nonzero polynomial of integer
coefficients. Let $\mathbb{Z}$ and $\mathbb{Z}^+$ be the set of
integers and the set of positive integers, respectively. Let
$Z_f:=\{x\in \mathbb{Z}: f(x)=0\}$ be the set of integer roots of $f(x)$
and $\{a_k\}_{k=1}^{\infty}:=\mathbb{Z}^+\setminus Z_f$ be arranged in the
increasing order. Then $f(a_k)\ne 0$ for all integers $k\ge 1$. Define
$$M_{k,f}(\vec{s}, n):=\sum\limits_{1\leq i_{1}<\cdots<i_{k}\le n}
\prod\limits_{j=1}^{k}\frac{1}{f(a_{i_{j}})^{s_j}}$$
and
$$M_{k,f}^*(\vec{s}, n):=\sum\limits_{1\leq i_{1}\le\cdots\le i_{k}\le n}
\prod\limits_{j=1}^{k}\frac{1}{f(a_{i_{j}})^{s_j}}.$$
If $Z_f$ is empty, then $M_{k,f}(\vec{s}, n)$ and $M_{k,f}^*(\vec{s}, n)$
become $H_{k,f}(\vec{s}, n)$ and $H_{k,f}^*(\vec{s}, n)$, respectively.

On the one hand, for any given integer $N_0\ge 1$, one can easily
find a polynomial $f_0(x)$ of integer coefficients such that for all
integers $n$ and $k$ with $1\le k\le n\le N_0$ and for any $k$-tuple
$\vec{s}=(s_1, ..., s_k)$ of positive integers, both of
$H_{k,f_0}(\vec{s}, n)$ and $H_{k,f_0}^*(\vec{s}, n)$
are integers. Actually, letting
$$f_0(x)=\prod_{i=1}^{N_0}(x-i)\pm 1$$
gives us the desired result.
On the other hand, for any given nonzero polynomial $f(x)$ of
integer coefficients, we believe that the similar integrality result
is still true. So in concluding this paper, we propose the following
more general conjecture that generalizes Conjecture 3.1 of \cite{[LHQW]}.

\begin{con}
Let $f(x)$ be a nonzero polynomial of integer coefficients and
$\{s_i\}_{i=1}^\infty$ be an infinite sequence of positive integers
(not necessarily increasing and not necessarily distinct).
Then there is a positive integer $N$ such that for any integer
$n\ge N$ and for all integers $k$ with $1\le k\le n$, both of
$M_{k,f}(\vec{s}^{(k)}, n)$ and $M_{k,f}^*(\vec{s}^{(k)}, n)$
are not integers, where $\vec{s}^{(k)}:=(s_1, ..., s_k)$ is the
$k$-tuple formed by the first $k$ terms of the sequence
$\{s_i\}_{i=1}^\infty$.
\end{con}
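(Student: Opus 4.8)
The plan is to follow the two-regime architecture of the proofs of Theorems~\ref{thm} and \ref{thm2}: an \emph{analytic} regime disposing of the cases in which $k$ is large relative to $n$, and a \emph{$p$-adic} regime handling the remaining small values of $k$ by exhibiting a prime at which the relevant sum has negative valuation. At the outset one reduces to the case where $f$ has positive leading coefficient, since replacing $f$ by $-f$ only multiplies every term by the global sign $(-1)^{\sum_j s_j}$ and leaves both integrality and the root set $Z_f$ unchanged. Because the reindexing $\{a_k\}=\mathbb{Z}^+\setminus Z_f$ already removes the integer roots, one may then assume that $f(a_m)>0$ and $f(a_m)\to\infty$ for $m$ beyond some $m_0$. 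The threshold $N$ of the conjecture is assembled as the maximum of the thresholds produced by the two regimes together with a finite computation covering small $n$.

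In the analytic regime one shows that $0<M_{k,f}(\vec{s}^{(k)},n)<1$, and likewise for the star sum, once $k$ exceeds a bound of logarithmic size in $n$, so these values cannot be integers. The engine is the one behind Lemma~\ref{lem4} and the cited Lemma~2.2 of \cite{[HW]}: once all arguments exceed $m_0$ one has $f(a_m)\ge a_m$ and, when $\deg f\ge 2$ or some $s_j\ge 2$, even $f(a_m)^{s_j}\ge a_m^2$, so the main part of the sum is dominated by a $\zeta$-type expression controlled through Lemmas~\ref{lem1}--\ref{lem3}, while the finitely many terms involving arguments below $m_0$ (the only ones whose sign is uncertain) form a bounded perturbation handled inside $N$. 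For the star sum one invokes the monotonicity in $k$ from Lemma~\ref{lem1}(ii) after checking $H^*_{1,\cdot}(n)<1$ via an estimate of the shape of Lemma~\ref{lem2}. The genuinely delicate subcase is $\deg f=1$ with all exponents equal to $1$, where the sums are truly harmonic, can exceed $1$, and throw one back onto the $p$-adic regime for a wider band of $k$.

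In the $p$-adic regime one seeks, for each small $k$, a prime $p$ whose factorization behaviour against $f$ isolates a single dominant term. For the star sum the cleanest route is to locate a prime $p$ and an index $m^*$ (among the first $n$ non-roots) with $p\,\|\,f(a_{m^*})$ and $p\nmid f(a_m)$ for every other $m$ with $a_m\le a_n$; then the diagonal term $i_1=\cdots=i_k=m^*$ uniquely attains the minimal valuation, so that $v_p\big(M^*_{k,f}(\vec{s}^{(k)},n)\big)=-\sum_{j=1}^{k}s_j<0$ exactly as in Lemma~\ref{lem7}. For the strict sum $M_{k,f}$ the diagonal term is forbidden, so one instead forces either $p^2\,\|\,f(a_{m^*})$ at a single index or, imitating the arithmetic-progression construction of the linear case, a prime dividing a precisely controlled set of values; in each case one writes $M_{k,f}=S_1+S_2$ and argues $v_p(S_1)<v_p(S_2)$ as in the proof of Theorem~\ref{thm2}.

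The main obstacle is the uniform production of such primes for \emph{arbitrary} $f$, which is precisely what the rigid structure of the linear case concealed. When $\deg f=d\ge 2$ the values $f(a_m)$ with $a_m\le a_n$ range up to size $\asymp n^{d}$, the solutions of $f(x)\equiv 0\pmod p$ are governed by the at most $d$ roots of $f$ modulo $p$ and their lifts, and one needs a prime $p$ in a narrow window dividing \emph{exactly one} of these values to \emph{exactly the first} power, for \emph{every} $n\ge N$. Securing this uniformly calls for quantitative prime-existence inputs of Chebotarev-plus-prime-gap type that lie well beyond the Bertrand-style Lemmas~\ref{lem5}--\ref{lem6} available here, and the appearance of negative values of $f$ at small arguments adds a new sign-cancellation wrinkle absent from the nonnegative-coefficient setting of Theorems~\ref{thm} and \ref{thm2}. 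Worse still, the conjecture subsumes the subcase explicitly flagged as open in Section~5 --- namely $\deg f=1$, $f\notin\{x,2x-1\}$, with some exponents equal to $1$ and others at least $2$ --- for which neither regime currently closes; any complete proof must therefore resolve that mixed-exponent linear gap simultaneously, and it is there that I expect the real difficulty to concentrate.
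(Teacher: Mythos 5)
There is no paper proof to compare against here: the statement you were asked to prove is Conjecture 5.1, which the authors explicitly propose as an \emph{open problem} --- Section 5 says the question ``seems hard to answer in general and is still kept open so far,'' and Theorems \ref{thm} and \ref{thm2} are offered only as evidence for the conjecture, not as a proof of it. Your submission must therefore stand entirely on its own, and it does not: it is an attack plan, not an argument. In the analytic regime you only describe what should happen (``dominated by a $\zeta$-type expression,'' ``a bounded perturbation handled inside $N$'') without carrying out any estimate for general $f$; note that Lemma \ref{lem4} and the cited Lemma 2.2 of \cite{[HW]} are proved only for linear polynomials, and neither the paper nor your sketch supplies the analogue for arbitrary $f$, where negative values and integer-root excision genuinely change the bookkeeping. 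The $p$-adic regime is where the decisive failure lies, and you name it yourself: for $\deg f\ge 2$ one needs, for every sufficiently large $n$ and every $k$ in the residual range, a prime $p$ in a prescribed window dividing exactly one of the values $f(a_m)$ with $a_m\le a_n$, and to exactly the first power. Lemmas \ref{lem5}--\ref{lem7} deliver this only for the arithmetic progression $2i-1$, and the ``Chebotarev-plus-prime-gap'' input you invoke is not proved anywhere and is not known in the uniform strength you would need. Finally, you concede that the mixed-exponent linear case ($\deg f=1$, $f\notin\{x,2x-1\}$, some $s_i=1$ and some $s_j\ge 2$) is untouched by both regimes; that is precisely the case the authors flag as open immediately before stating the conjecture, so your proposal subsumes an admitted unsolved problem as one of its steps.

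To be fair about what you did well: your diagnosis of where the difficulty concentrates is accurate and agrees with the authors' own discussion, and flagging the unclosed steps rather than papering over them is the right instinct. But a proof whose final paragraph identifies which of its own steps cannot currently be carried out is a research program, not a proof. The correct conclusion --- yours and the paper's --- is that the statement remains a conjecture.
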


Obviously, the results presented in \cite{[CT]}, \cite{[EN]}-\cite{[HW]},
\cite{[LHQW]}-\cite{[WH]} and Theorems 1.1 and 1.2
of this paper provide evidences to Conjecture 5.1.\\

\begin{center}
Acknowledgement
\end{center}
The authors would like to thank the anonymous referee for the careful reading of
the manuscript and helpful comments.\\

\begin{center}
{\sc Appendix}
\end{center}

with(linalg): with(numtheory): with(combinat, choose):$R$:=matrix(1,20):\\
$n:=[0,57, 110, 211, 634, 761, 1042, 1191]$:\\
for $k$ from 2 to 8 do\\
 print($k$);\\
for $m$ from 1 to 29 do\\
$s:=[m,1,1{\$}(k-2)]$:\\
$S$:=vector($k$,0);\\
for $i$ from $k$ to $n[k]$ do\\
$S[1]:=S[1]+1/(2*i-2*k+1)^{s[1]}$;\\
for $j$ from 2 to $k$ do\\
$S[j]:=S[j]+S[j-1]/(2*i-2*k+2*j-1)^{s[j]}$;\\
if type(($S[j]$,integer)) then\\
print($i-k+j$,$j$*IsInt)\\
end  if: end do:\\
end do:\\
od:od:

\bibliographystyle{amsplain}

\begin{thebibliography}{10}
\bibitem{[CT]} Y.G. Chen and M. Tang, On the elementary symmetric functions of
$1, 1/2, ...,$ $1/n$, {\it Amer. Math. Monthly} {\bf 119} (2012), 862-867.
\bibitem{[PD1]} P. Dusart, Autour de la fonction qui compte le
nombre de nombres premiers, PhD Thesis, Uniersit\'{e} de Limoges, 1998.
\bibitem{[PD2]} P. Dusart, In\'{e}galit\'{e}s explicites pour
$\psi(x), \theta(x),\pi(x)$ et les nombres premiers,
{\it C.R. Math. Acad. Sci. Soc. R. Can.} {\bf 21} (1999), 53-59.
\bibitem{[EN]} P. Erd\"{o}s and I. Niven, Some properties of partial sums of
the harmonic series, {\it Bull. Amer. Math Soc.} {\bf 52} (1946), 248-251.
\bibitem{[HW]} S.F. Hong and C.L. Wang, The elementary symmetric functions
of reciprocals of the elements of arithmetic progressions,
{\it Acta Math. Hungar.} {\bf 144} (2014), 196-211.
\bibitem{[K]} N. Koblitz, {\it $p$-Adic numbers, $p$-adic analysis and zeta-functions},
GTM {\bf 58}, Springer-Verlag, New York, 1984.
\bibitem{[LHQW]} Y.Y. Luo, S.F. Hong, G.Y. Qian and C.L. Wang, The elementary symmetric
functions of a reciprocal polynomial sequence, {\it C.R. Acad. Sci. Paris, Ser. I}
{\bf 352} (2014), 269-272.
\bibitem{[KH]} K.H. Pilehrood, T.H. Pilehrood and R. Tauraso,
Multiple harmonic sums and multiple harmonic star sums are (nearly) never
integers,  {\it Integers} {\bf 17} (2017), \#A10.
\bibitem{[WH]} C.L. Wang and S.F. Hong, On the integrality of the elementary
symmetric functions of $1, 1/3, ...,$ $1/(2n-1)$, {\it Math. Slovaca}
{\bf 65} (2015), 957-962.  
\bibitem{[YLFJ]} W.X. Yang, M. Li, Y.L. Feng and X. Jiang,
On the integrality of the first and second elementary symmetric functions of
$1,1/2^{s_2}  ,..., 1/n^{s_n}$, {\it AIMS Math.} {\bf 2} (2017), 682-691.
\bibitem{[Z]} D. Zagier, Evaluation of the multiple zeta values
$\zeta (2, ..., 2, 3, 2, ..., 2)$, {\it Ann. Math.} {\bf 175} (2012), 977-1000.
\bibitem{[Z1]} J.Q. Zhao, Analytic continuation of multiple zeta functions,
{\it Proc. Amer. Math. Soc.} {\bf 128} (1999), 1275-1283.
\bibitem{[Z2]} J.Q. Zhao, {\it Multiple zeta functions, multiple polylogarithms
and their special values}, World Scientific Publishing, 2016.
\end{thebibliography}

\end{document}